\documentclass[12pt,reqno]{amsart}
\usepackage{amsaddr}
\usepackage[ocgcolorlinks,hyperfootnotes=false,colorlinks=true,citecolor=blue,linkcolor=blue,urlcolor=blue]{hyperref}
\author{Tirthankar Bhattacharyya and Ritul Duhan}
\address{Department of Mathematics, 	Indian Institute of Science, \\
	Bangalore 560012, India}
\email{tirtha@iisc.ac.in; ritul2023@iisc.ac.in}

\usepackage[capitalize,nameinlink]{cleveref}

\usepackage{amsmath, color, bm, amscd, tikz-cd}
\usepackage{csquotes}

\usepackage{tikz}

\newtheorem{thm}{Theorem}[section]

\newtheorem{cor}[thm]{Corollary}
\newtheorem{lem}[thm]{Lemma}

\newtheorem{notation}[thm]{Notation}

\newtheorem{theorem}{Theorem}
\newtheorem{defn}[thm]{Definition}

\numberwithin{equation}{section}

\def\tilde{\widetilde}

\def\cS{\mathcal{S}}

\def\bar{\overline}

\def\C{\mathbb{C}}

\def\epsilon{\varepsilon}

\def\min{\mathrm{min}}

\def\phi{\varphi}

\def\cS{\mathcal{S}}
\def\T{\mathbb{T}}
\def\cT{\mathcal{T}}

\def\Z{\mathbb{Z}}
\def\ucp{{\it{ucp }}}
\def\cp{{\it{cp }}}

\newcommand{\zbar}{\overline z}

\newcommand{\Bmn}{\mathcal{B}_m^{(n)}}
\newcommand{\Ym}{\mathcal{Y}_m}

\begin{document}
	
	\title[Pure matrix states on block Toeplitz matrices]{Pure matrix states on block Toeplitz matrices}

	\begin{abstract}
		Let $\mathcal{T}_{n,m} = \mathcal T_n(M_m(\mathbb{C}))$ denote the operator system of all block Toeplitz matrices 
		$ T = (( T_{i-j}))_{i,j=1}^n$ with entries $T_k \in M_m(\mathbb C) $
		% T_{k} = \left[ t^{(k)}_{p-q} \right]_{p,q=1}^m.
               \[
		T
		=
		\begin{pmatrix}
			T_0      & T_{-1} & \cdots & T_{-(n-1)} \\
			T_1      & T_0    & \cdots & T_{-(n-2)} \\
			\vdots            & \vdots          & \ddots & \vdots \\
			T_{n-1}  & T_{n-2}& \cdots & T_0
		\end{pmatrix}
		\in M_{mn}(\mathbb{C}).
		\]
		 We characterize all pure unital completely positive ({\it{ucp}}) maps from $\mathcal{T}_{n,m}$ to $M_m(\mathbb{C})$. 

Working through the Stinespring isometry $V = (V_1, \ldots, V_n)^t \colon \mathbb{C}^m \to \mathbb{C}^{mn}$
and the matrix-valued polynomial $Q_V(z) = \sum_{i=1}^n z^{n-i} V_i$, we prove that
$\varphi$ is pure if and only if it admits a unique pure \ucp extension to
$M_{mn}(\mathbb{C})$ if and only if $Q_V$ has degree $n-1$ with all its roots on
the unit circle $\T$.

Every such pure $\varphi$ induces a \ucp map $\Phi_{Q_V}$ on $C(\mathbb{T}, M_m(\mathbb{C}))$ given by
\[
  \Phi_{Q_V}(f) = \int_{\mathbb{T}} Q_V(z)^* f(z) Q_V(z)\, dz.
\]
Let $\mathcal{Y}_m$ be the compact convex set of all \ucp maps from $C(\mathbb{T}, M_m(\mathbb{C}))$ to
$M_m(\mathbb{C})$. Endowing $\mathcal{Y}_m$ with the matricial Monge-Kantorovich metric $\rho$, via an analysis of the extreme points of $\mathcal{Y}_m$ together with a point-splitting lemma, we show that the induced maps as above are
$\rho$-dense in $\mathcal{Y}_m$. Consequently, if $\Bmn$ denotes
the set of normalized $\Phi_{Q_V}$ where $Q_V$ is as above, then the Hausdorff distance 
$d_H(\Bmn, \mathcal{Y}_m) \to 0$ as $n \to \infty$, extending known results of approximation of positive regular Borel measures on the unit circle to the setting of matrix-valued  completely positive maps.
\end{abstract}
	\maketitle
	
	{\footnotesize \noindent 2020 Mathematics Subject Classification: 46L07, 47L25, 58B34, 81R15. \\
		Keywords: Block Toeplitz operator systems, \ucp maps, matrix-valued trigonometric polynomials, unique \cp extension, extreme points, Monge-Kantorovich metric, Hausdorff and Gromov-Hausdorff convergence.}
	
	\smallskip
\noindent\textbf{Funding:} \\
{\footnotesize{T. Bhattacharyya is supported by the J C Bose Fellowship  JCB/2021/000041 of SERB and R. Duhan is supported by the Prime Minister's Research Fellowship (PMRF ID 0202985) of the Government of India.  This research is supported by the DST FIST program-2021 [TPN-700661].}}

 \smallskip

\noindent {\footnotesize{\textbf{Acknowledgement:} \\
The authors are thankful to B. V. Rajarama Bhat and Abhay Jindal for useful discussions. AI tools have been used to improve writing.}}
	
	\section{Introduction}\label{sec:intro}
	
\subsection{The main contribution}	The characterization of pure states and extremal channels constitutes a foundational theme in operator algebras and quantum information theory. 

By the classical Stinespring dilation theorem, a \ucp map  on a $C^*$-algebra is pure if and only if its minimal Stinespring dilation yields an irreducible representation. This is no longer available once one passes from $C^*$-algebras to general {\em operator systems} - subspaces of $\mathcal B(\mathcal H)$ that contain the identity and are closed under adjoints. Characterizing pure \ucp maps on an operator system is considerably more delicate, primarily because the extension of a \ucp map from an operator system to its generated $C^*$-algebra is rarely unique, see ~\cite{Clou} for intricate conditions ensuring unique extension. 

The Toeplitz matrices have been studied from time immemorial. However, the attention given to them from the points of view of spectral truncation (see ~\cite{CvS}) or operator system theory (see ~\cite{Far_TAMS}) seems to be recent.  A novel idea introduced in ~\cite{CvS} relates purity of a state on the Toeplitz operator system to the location of roots of an associated polynomial. This characterization of pure states inspired ~\cite{DJ} to give an explicit characterization of pure \ucp maps defined on the (scalar) Toeplitz operator system $\mathcal T_d= \mathcal T_d(\mathbb C)$ and taking values in $M_m(\mathbb C)$, for $d,m$ independent of one another, in terms of the associated positive $M_m(\mathbb C)$-valued trigonometric polynomial: purity corresponds to a \emph{rank-deficiency} condition, see [~\cite{DJ}, Theorem 1]. 
	
	Block Toeplitz matrices arise naturally as compressions of multiplication operators governed by matrix-valued symbols, and this makes $\mathcal T_{n,m}$ a bridge between operator theory, harmonic analysis, and matrix-convex geometry. Let $C(\T, M_m(\C))$ be the space of all matrix-valued continuous functions on the unit circle. Considering $C(\T, M_m(\C))$ as a subalgebra of $\mathcal B(L^2(\T), \mathbb C^m)$ by identifying $f \in  C(\T, M_m(\C))$ with the multiplication operator $M_f$ on $\mathcal B(L^2(\T), \mathbb C^m)$ and letting $P_n$ to be the projection onto $\mathcal H_n =  \mathbb C^m \oplus \cdots \oplus \mathbb C^m $ ($n$ times),  the compression $P_nM_fP_n$ acting on $\mathcal H_n$ is represented by the block Toeplitz matrix
	\[
	\begin{pmatrix}
		\hat{f}(0)      & \hat{f}(-1) & \cdots & \hat{f}(-n+1) \\
		\hat{f}(1)      & \hat{f}(0)   & \cdots & \hat{f}(-n+2) \\
		\vdots            & \vdots          & \ddots & \vdots \\
		\hat{f}(n-1)  & \hat{f}(n-2) & \cdots & \hat{f}(0)
	\end{pmatrix}.
\]
Thus the finite block Toeplitz operator system $\cT_{n,m}$ may be realised as $\{P_nM_fP_n: f \in C(\T, M_m(\C)) \}.$ 

This raises the hope of following the well-charted path of associating a positive $M_m(\mathbb C)$-valued trigonometric polynomial with a \ucp map. This naive attempt quickly runs into difficulty. Every \ucp map $\varphi \colon \mathcal T_{n,m} \to M_m(\mathbb C)$ does give rise to an $M_m(\mathbb C)$-valued positive-semidefinite trigonometric polynomial $P_\varphi$ by considering $\eta(z) = (I, \zbar I, \zbar^2 I, \ldots , \zbar^{n-1} I)^t$, where the identity matrices are $m \times m$, letting $M(z) = \eta(z) \eta(z)^*$ and defining $\mathcal P_\varphi$  to be $\varphi (M(z))$. While complete positivity of $\varphi$ implies that $\mathcal P_\varphi \ge 0$ on the unit circle, $\varphi$ and $\mathcal P_\varphi$ are not related by
	$$ \varphi (P_nM_fP_n) = \int_{\mathbb T} f(z) \mathcal P_\varphi(z)\; dz$$
	where $dz$ is the normalized arc length measure on $\mathbb T$, thereby rendering the powerful methods of applying the classical Fej\'er-Riesz Theorem invalid in  the block case. The block Toeplitz operator system also differs in the fact that not all positive maps are automatically completely positive.

There is a well-studied concept of unique extension property in noncommutative Choquet theory in the seminal works ~\cite{Arveson72, Arveson08} and established in full generality in ~\cite{DK15, DK25}. The noncommutative Choquet boundary is intrinsically governed by UEP (unique extension property) - the phenomenon where a \ucp map on an operator system admits a unique \ucp extension to its generated C$^*$-algebra. If this extension is an irreducible $*$-representation, it is called a boundary representation. These boundary representations are also studied in ~\cite{DM, Kleski} and have vast applications in calculating C$^*$-envelope, to check the hyperigidity of an operator system, and in finding maximal dilations in operator theory. So, it is worthwhile to consider the notion of unique \ucp extension.  
	
	The contribution of this note lies in proving that purity forces unique extension as well as a strictly stronger degeneracy - the vanishing of an entire matrix, not merely a proper subspace of it.
	
	\begin{theorem} \label{TheMainTheorem} 
Let $\varphi \colon \mathcal T_{n,m} \to M_m(\mathbb C)$ be a \ucp map. The following are equivalent
		
		(i) $\varphi $ is pure.
		
		(ii) $\varphi $ has a unique pure \ucp extension to $ M_{mn}(\mathbb C)$. 
		
		(iii)  $\varphi(T) = V^* T V$ for an isometry $V = (V_1, V_2, \ldots , V_n)^t$ from $\mathbb C^m$ into $\mathbb C^{mn}$ which satisfies the following condition: the $M_m(\mathbb C)$-valued polynomial $Q_V(z) = \sum\limits_{i=1}^n z^{n-i}V_i $ is of degree $n-1$ has all its roots (i.e., complex points where $Q_V$ reduces to the zero polynomial) on the unit circle $\mathbb T$.
	\end{theorem}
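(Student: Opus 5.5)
The plan is to route everything through Stinespring/Kraus data on $M_{mn}(\C)$ and to encode a compression $V^*(\cdot)V$ restricted to $\cT_{n,m}$ by the polynomial $Q_V$.

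\emph{Step 1 (setup).} The $C^*$-algebra generated by $\cT_{n,m}=\cT_n\otimes M_m(\C)$ is $M_{mn}(\C)$. By Arveson's extension theorem every \ucp $\varphi$ on $\cT_{n,m}$ extends to $M_{mn}(\C)$. The set $E_\varphi$ of such extensions is a compact convex face of $\mathrm{UCP}(M_{mn}(\C),M_m(\C))$. The pure elements of the latter are exactly the maps $X\mapsto W^*XW$ with $W\colon\C^m\to\C^{mn}$ an isometry (Kraus rank one).

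\emph{Step 2 (compressions agreeing on $\cT_{n,m}$).} Let $S$ be the $n\times n$ shift. Testing on $S^k\otimes A$ gives
\[
V^*(S^k\otimes A)V=\sum_i V_{i+k}^*AV_i .
\]
Pairing with an arbitrary $E\in M_m(\C)$ via the trace, the whole family is encoded by the Laurent polynomial $z\mapsto Q_V(z)\,E\,Q_V(z)^*$ on $\T$, with $E$ arbitrary. Hence $V^*(\cdot)V$ and $W^*(\cdot)W$ agree on $\cT_{n,m}$ if and only if
\[
Q_V\otimes\overline{Q_V}=Q_W\otimes\overline{Q_W}\quad\text{on } \T .
\]

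\emph{Step 3 (factorization).} I will solve this equation by factoring out the scalar g.c.d. of the entries, writing $Q_V=p\,R$ with $p$ a scalar polynomial and the entries of $R$ having no common zero. Comparing the entrywise products $q_{ab}\overline{q_{cd}}$ should then force
\[
Q_W=c\,p'\,R,\qquad |c|=1,\quad |p'|=|p| \text{ on } \T,\quad \deg p'\le n-1-\deg R .
\]
By scalar Fej\'er--Riesz, the admissible $p'$ arise from $p$ by reflecting roots $\alpha\mapsto 1/\bar\alpha$. Reflections also include trading a root at $\infty$, i.e.\ a degree deficiency, for a root at $0$.

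\emph{Step 4 (proof of (ii)$\Leftrightarrow$(iii)).} Roots of $Q_V$ are exactly the roots of $p$. The map $V^*(\cdot)V$ is the only pure extension if and only if no nontrivial reflection is possible. This happens precisely when $\deg Q_V=n-1$ and every root of $p$ lies on $\T$. For (ii)$\Rightarrow$(iii), take $V$ from the unique pure extension.

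\emph{Step 5 (proof of (ii)$\Rightarrow$(i)).} By Krein--Milman, the face $E_\varphi$ has pure extreme points, so a unique pure extension forces $E_\varphi=\{\psi\}$ with $\psi$ pure. If $\varphi=t\varphi_1+(1-t)\varphi_2$, extend each $\varphi_j$ to $\psi_j$. Then $t\psi_1+(1-t)\psi_2\in E_\varphi$, so it equals $\psi$. Purity of $\psi$ gives $\psi_j=\psi$, hence $\varphi_j=\varphi$.

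\emph{Step 6 (proof of (i)$\Rightarrow$(iii)).} Since $E_\varphi$ is a face, a pure $\varphi$ has an extreme, hence pure, extension $V^*(\cdot)V$. Suppose the condition in (iii) fails: either $p$ has a root $\alpha\notin\T$, or $\deg Q_V<n-1$. Write $Q_V=\ell\, Q'$ with $\ell(z)=z-\alpha$, or $\ell$ constant in the deficient case.
\begin{itemize}
\item Split the degree-one positive trigonometric polynomial as
\[
|\ell|^2=a|\ell_1|^2+b|\ell_2|^2 \quad\text{on } \T ,
\]
with $a,b>0$ and $\ell_1,\ell_2$ linear factors whose roots are neither equal nor reflections of each other. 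The degree room guarantees that $\ell_jQ'$ still has degree at most $n-1$.
\item Using Step~2 this yields $\varphi=a\varphi_1+b\varphi_2$, where $\varphi_j$ is the compression by the coefficient vector of $\ell_jQ'$.
\item Unitality of $\varphi$ and of the $\varphi_j$, after normalization, gives $a+b=1$.
\item By Step~3 the $\varphi_j$ are distinct, contradicting purity.
\end{itemize}

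The main obstacle is Step~3: the rigidity statement that $Q_V\otimes\overline{Q_V}=Q_W\otimes\overline{Q_W}$ on $\T$ forces $Q_W=c\,p'R$. I would prove it entrywise. Fix a nonzero entry $r_{ab}$ of $R$. From $q^W_{ab}\overline{q^W_{cd}}=q^V_{ab}\overline{q^V_{cd}}$, all ratios $q^W_{cd}/q^W_{ab}$ coincide with $q^V_{cd}/q^V_{ab}$ as rational functions. Hence $Q_W=h\,R$ for a scalar rational $h$. Coprimality of the entries of $R$ makes $h$ a polynomial, and $|h|=|p|$ on $\T$ then follows.
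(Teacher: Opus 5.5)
\textbf{The gap: the direction (iii)$\Rightarrow$(i) is never established.} Steps 2--4 only ever compare $V$ with a \emph{single} isometry $W$. So what Step 4 actually proves is that (iii) holds iff $V^*(\cdot)V$ is the only extension of Kraus rank one. In the paper, (ii) means that the ucp extension to $M_{mn}(\C)$ is unique (and pure); this is what Theorem~\ref{thm:converse} needs in order to deliver (i).

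You try to bridge this in the first sentence of Step 5, and that sentence fails for two reasons:
\begin{enumerate}
\item $E_\varphi$ is the preimage of $\{\varphi\}$ under the (surjective, affine) restriction map. It is therefore a face of $\mathrm{UCP}(M_{mn}(\C),M_m(\C))$ exactly when $\varphi$ is already extreme, so using it to prove extremality is circular.
\item Even genuine extreme points of $\mathrm{UCP}(M_{mn}(\C),M_m(\C))$ need not have Kraus rank one when $m\ge 2$. By Choi's criterion, $\sum_r W_r^*(\cdot)W_r$ is extreme iff $\{W_r^*W_s\}_{r,s}$ is linearly independent, and this permits Kraus rank $>1$.
\end{enumerate}
Consequently nothing in your argument excludes an extension $\Phi=\sum_{r=1}^N W_r^*(\cdot)W_r$ with $N\ge 2$, or a cp map $\psi\le\varphi$ on $\cT_{n,m}$ that is not a multiple of $\varphi$.

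The same conflation of ``extreme'' with ``pure'' recurs twice more. In Step 6, ``extreme, hence pure'' is the wrong justification. The conclusion is true for pure $\varphi$, but it needs either Farenick's theorem (as in Lemma~\ref{lem:pureV}) or the argument that a dominated $\psi\le\Phi\in E_\varphi$ restricts to $t\varphi$. The end of Step 5 likewise shows only that $\varphi$ is extreme, not that it is pure in the cp-order sense the paper uses.

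\textbf{What is missing is the content of the paper's Theorem~\ref{thm:unique_CP}.} For an arbitrary cp extension you only get
\[
\sum_r Q_{W_r}\otimes\overline{Q_{W_r}}=Q_V\otimes\overline{Q_V}\quad\text{on }\T ,
\]
not a rank-one identity. The argument then runs as follows.
\begin{enumerate}
\item Pointwise uniqueness of a rank-one Kraus decomposition (Lemma~\ref{lem:kraus}) gives $Q_{W_r}(z)=c_r(z)Q_V(z)$ with $\sum_r|c_r(z)|^2=1$.
\item Your Step~3 argument, which is correct and cleaner than the paper's Blaschke-product route in Lemma~\ref{lem:monomial}, then yields polynomials $h_r$ with $Q_{W_r}=h_rR$ and $\sum_r|h_r|^2=|p|^2$ on $\T$.
\item This is where the root hypothesis does real work. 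Since $|h_r|\le|p|$ on $\T$ and every root of $p$ lies on $\T$, each $h_r$ vanishes at those roots to at least the same order. Hence $p\mid h_r$.
\item The degree bound $\deg h_r\le n-1-\deg R=\deg p$ then forces $h_r=c_rp$.
\end{enumerate}
The paper phrases steps 3--4 via $A_r/B$ in lowest terms together with $\sum_r|A_r|^2=|B|^2$. Once uniqueness of the extension is established, purity (not just extremality) of $\varphi$ follows: extend a dominated $\psi$ and $\varphi-\psi$ separately, add them to get an extension of $\varphi$, and use purity of $V^*(\cdot)V$.

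\textbf{A further error in Step 6.} The claim that normalisation gives $a+b=1$ fails for $m\ge 2$, because $\varphi_j(1)=\int_\T Q'(z)^*|\ell_j(z)|^2Q'(z)\,dz$ need not be a scalar matrix. Argue instead that $a\varphi_1\le\varphi$ in the cp order, so purity forces $a\varphi_1=t\varphi$. Your Steps 2--3, which do not require isometries, then give $a|\ell_1|^2=t|\ell|^2$ on $\T$, and your choice of roots excludes this.
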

Our proofs follow a detailed analysis of the polynomial $Q_V$. 

\subsection{Hausdorff convergence}
\begin{defn}
A polynomial $Q_V$ where $V$ is an isometry as above will be called a regular polynomial if its roots lie on the unit circle $\mathbb{T}$.
\end{defn}
Regular polynomials are going to appear frequently in this paper. The analogue of Theorem ~\ref{TheMainTheorem} for states - which is the motivation for obtaining Theorem ~\ref{TheMainTheorem} - was used  in  ~\cite{Hek} to obtain that any state on $C(\T)$ can be approximated in the Monge--Kantorovich metric by a pure state. 

Approximation of a probability measure $\mu$ on the unit circle is a classical theme. The Szego quadrature produces, for every $n$, a measure $\mu_n$ which is moment-exact with $\mu$ on trigonometric polynomials of degree up to $n-1$, converges weak-* to $\mu$ and has nodes on the unit circle which are zeroes of a paraorthogonal polynomial, see section 7 of \cite{JNT}. The next results are in this vein.

We first write $\varphi$ explicitly in terms of $Q_V$. Let $C^*(\mathbb Z , M_m(\mathbb C))_{(n)}$ be the matrix-valued Fej\'er-Riesz operator system - in the language of ~\cite{CvS} - of all trigonometric polynomials $A(z) = \sum_{k=-n+1}^{n-1} A_k z^k$ where the $A_k$ are from $M_m(\mathbb C)$. With $T$,  associate the element $F_T(z) = \sum_{k=-(n-1)}^{n-1} z^{-k} T_k$ of $C^*(\mathbb Z , M_m(\mathbb C))_{(n)}$. By Fourier extraction, we have
		\[
		T_{i-j} = \int_{\mathbb{T}} z^{i-j} F_T(z)\; dz
		\]
		where $dz$ is the normalized arc length measure on $\T$. So,
		\[
		V^* T V = \sum_{i,j=1}^n V_i^* T_{i-j} V_j = \sum_{i,j=1}^n V_i^* \left( \int_{\mathbb{T}} z^{i-j} F_T(z) \;dz \right) V_j
		\]
		which gives that 
\begin{equation}\label{phiFandintegral}
		\varphi(T) = \int_{\mathbb{T}} Q_V(z)^* F_T(z) Q_V(z)\; dz.
		\end{equation}
Thus, starting with a pure \ucp map $\varphi \colon \mathcal T_{n,m} \to M_m(\mathbb C)$, we are naturally led to define 
\begin{equation} \label{ucponwholeC}\Phi_{Q_V}: C(\mathbb{T}, M_m(\mathbb{C})) \to M_m(\mathbb{C}) \text{ by }
		\Phi_{Q_V}(f) =\int_{\mathbb{T}} Q_V(z)^* f(z) Q_V(z) \, dz, \end{equation}
where $Q_V$ is a regular polynomial. This leads us to pose the question of how large the set of \ucp maps induced by regular polynomials is in 
 {\em the convex set, $\mathcal{Y}_m$ of all unital completely positive (\it{ucp}) linear maps $\Phi: C(\mathbb{T}, M_m(\mathbb{C})) \to M_m(\mathbb{C})$.}
 
   The relevant topology is the bounded weak (BW) topology on $\mathcal{Y}_m$. A net $\phi_\alpha$ converges to $\phi$ in $\mathcal{Y}_m$ in the BW topology if and only if
	\[
	\|\phi_\alpha(f) - \phi(f)\|_{M_m(\mathbb{C})} \longrightarrow 0 \quad \forall f \in C(\mathbb{T}, M_m(\mathbb{C})).
	\]

Using [~\cite{Paul}, Theorem 7.4] we get that $\mathcal{Y}_m$ is a compact convex subset of the bounded linear operators $\mathcal{B}(C(\mathbb{T}, M_m(\mathbb{C})), M_m(\mathbb{C}))$ under the BW topology. It is also metrizable.

\begin{notation} \label{L} Denote $\mathcal{L} = \{ f \in C^1(\mathbb{T}, M_m(\mathbb{C})) : \, \|f\|_\infty \le 1\; \text{and}\; \|f'\|_\infty \le 1 \}$ 	where $\|f\|_\infty = \sup_{z \in \mathbb{T}} \|f(z)\|_{M_m(\mathbb{C})}$.
\end{notation}

\begin{defn} \label{rho}
	Let $C^1(\mathbb{T}, M_m(\mathbb{C}))$ denote the space of continuously differentiable matrix-valued functions on $\mathbb{T}$. The matricial Monge--Kantorovich metric $\rho$ (see ~\cite{Kerr}) on $\mathcal{Y}_m$ is defined as
	\[
	\rho(\phi, \psi) = \sup \left\{ \|\phi(f) - \psi(f)\|_{M_m(\mathbb{C})} : f \in \mathcal L \right\}.
	\]
\end{defn}
The metric $\rho$ induces the bounded weak (BW) topology on $\mathcal{Y}_m$. 
In the case when $m=1$, i.e., when $\varphi$ is a state, $V$ is a unit vector $v$ and commutativity implies that $\varphi$ is implemented by an absolutely continuous measure whose Radon-Nikodym derivative with respect to $dz$ is $|Q_v|^2$. It is then possible to multiply the  Radon-Nikodym derivatives and that is a crucial step in showing that any state on $C(\T)$ can be approximated by pure states (of the truncated system), see [~\cite{Hek}, Section 3.1]. Following altogether a different route - finding the extreme points of $\mathcal{Y}_m$, proving a point-splitting lemma and analyzing the polynomials - we prove the following approximation.
\begin{theorem}\label{thm:existenceQ_V}
	Given an arbitrary $\Phi \in \mathcal{Y}_m$ and any $\epsilon > 0$, there exists a regular matrix-valued polynomial $Q_V(z) \in M_m(\C[z])$ such that
	the induced \ucp map ~\eqref{ucponwholeC} is pure and satisfies $\rho(\Phi, \Phi_{Q_V}) < \epsilon$.
\end{theorem}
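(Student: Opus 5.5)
Since $\Phi\mapsto\rho(\Phi,\cdot)$ is continuous and $\mathcal Y_m$ is compact convex and metrizable, Krein--Milman lets us reduce to the extreme points, provided the set of approximable maps is shown to be closed under (approximate) convex combinations. This last point is the delicate one: $\Phi_{Q_V}$ is not linear in $Q_V$. The plan therefore has three steps.

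\emph{Step 1: identify extreme points.} By Stinespring, every $\Phi\in\mathcal Y_m$ has the form
\[
\Phi(f)=W^*\pi(f)W,
\]
where $\pi$ is a representation of $C(\T)\otimes M_m$, hence a direct sum of point evaluations $f\mapsto f(z)$ (up to multiplicity) together with an atomless part. Extreme points of $\mathcal Y_m$ should be finitely supported:
\[
\Phi(f)=\sum_{k=1}^r A_k^* f(z_k) A_k,\qquad \sum_k A_k^*A_k=I,
\]
with $r\le m$ distinct nodes and an Arveson-type linear independence condition (for instance, $\{A_k^*A_j\}$ independent in a suitable sense). In any case, maps with finite support are $\rho$-dense: partition $\T$ into arcs of length $<\delta$ and replace $f$ by its values at chosen nodes. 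The resulting error is at most $\delta\|f'\|_\infty\le\delta$ uniformly for $f\in\mathcal L$. This gives density of the maps $\sum_k A_k^*f(z_k)A_k$ directly, without needing the extreme-point classification, and it also gives the needed closure under convex combinations for free.

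\emph{Step 2: point splitting.} Replace each node $z_k$ by $m$ nearby distinct nodes, or use rank-one decompositions of $A_k$. This approximates $\Phi$ within $\epsilon$ by a map of the form
\[
\Psi(f)=\sum_{j=1}^N v_j^* f(w_j) v_j,
\]
where $v_j\in M_{1\times m}$ are row vectors and the $w_j$ are distinct points of $\T$. The moving error is again bounded by the Lipschitz control from $\mathcal L$. Genericity also lets us assume the $w_j$ are distinct and the $v_j$ are in general position.

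\emph{Step 3: realize $\Psi$ by $\Phi_{Q_V}$.} Here we mimic the scalar construction of \cite{Hek}. For a regular $Q_V$ with roots on $\T$, the density $Q_V^*Q_V$ vanishes at those roots. We want $Q_V(z)^*Q_V(z)\,dz$ to concentrate near the $w_j$ in the rank-one directions $v_j^*v_j$. Try
\[
Q(z)=c\,p(z)^L\sum_j \ell_j(z)\, e_j v_j,
\]
where $p$ is a scalar polynomial whose roots are equally spaced on $\T$ and which is peaked near the $w_j$ (a Fej\'er-kernel type factor $F_L(z\bar w_j)$, written as $|g_j|^2$ with $g_j$ having roots on $\T$), and the $e_j$ are chosen so that cross terms are small. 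A simpler option uses, for each $j$, $g_j(z)=\sum_{k<L}(z\bar w_j)^k/\sqrt L$, which has roots on $\T$. Then $|g_j|^2dz\to\delta_{w_j}$ weak-* with rate $O(\log L/L)$ in the Lipschitz dual. Set
\[
Q(z)=\sum_j g_j(z)\,u_j v_j
\]
for orthonormal-ish column vectors $u_j$. The cross terms $\int \bar g_i g_j f$ tend to $0$ for $i\ne j$ because $w_i\ne w_j$. Normalizing by $\bigl(\int Q^*Q\bigr)^{-1/2}$ on the right (the paper's ``normalized'') gives a ucp map within $\epsilon$ of $\Psi$.

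\textbf{Main obstacle.} Regularity and degree must hold exactly: the common zeros of $Q$ must lie on $\T$, and the degree must be $n-1$. Common zeros of a matrix polynomial are rare, so the real issue is to force every root onto $\T$ and have enough of them to make the map pure via Theorem~\ref{TheMainTheorem}. To handle this, first multiply $Q$ by a scalar factor $h(z)$ with all roots on $\T$ and $\int|h|^2=1$ nearly constant on the relevant region, e.g. $h(z)=(z^K-\zeta)/\sqrt 2$. This makes $Q$ vanish identically at prescribed unit-circle points. Then perturb the coefficients slightly so that no other common roots appear, which is generic, and so that the leading coefficient is nonzero. Purity of $\Phi_{Q_V}$ then follows from Theorem~\ref{TheMainTheorem}(iii), and small perturbations change $\rho$ only slightly.
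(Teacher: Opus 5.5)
\textbf{The gap is Step 2.} For $m\ge 2$ you cannot reduce to maps whose Kraus operators have rank one. (Your display $v_j^*f(w_j)v_j$ with row vectors $v_j$ does not typecheck; Step 3 shows you mean Kraus operators $u_jv_j$.) Test on constant functions $f\equiv X$ with $\|X\|\le 1$, which lie in $\mathcal L$. Every such $\Psi$ restricts to $X\mapsto\sum_j(u_j^*Xu_j)\,v_j^*v_j$. This is a unital completely positive map on $M_m(\C)$ with separable Choi matrix, and such maps form a compact set. The evaluation $\Phi(f)=f(1)$ restricts to the identity map, which is not in that set: by Lemma~\ref{lem:kraus}, every Kraus operator of $X\mapsto X$ is a multiple of $I_m$, hence has rank $m\ge 2$. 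So $\rho(\Phi,\Psi)$ is bounded below by a positive constant for every such $\Psi$.

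Step 3 converges to exactly maps of this kind: the cross terms vanish and $\Phi_Q(f)\to\sum_j(u_jv_j)^*f(w_j)(u_jv_j)$. So it cannot reach $\Phi$. Point splitting may distribute the Kraus operators already present at a node among distinct nearby nodes; this is the paper's Point-Splitting Lemma, with one full matrix $B_i$ per node. It cannot break a single full-rank Kraus operator into rank-one pieces. The repair is to keep full Kraus operators: take $Q(z)=\sum_j g_j(z)A_j$ with distinct $w_j$ and $\sum_jA_j^*A_j=I$. Your Fej\'er-type cross-term estimates then give $\Phi_Q\to\sum_jA_j^*f(w_j)A_j$ in $\rho$.

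\textbf{Regularity, and comparison with the paper.} With that repair, your regularity step works for $m\ge 2$, but for a different reason than you give. Condition (iii) of Theorem~\ref{TheMainTheorem} does not require ``enough'' roots. It only requires $\deg Q_V=n-1$ and that every common zero lie on $\T$, which holds vacuously if there are none. For $m\ge2$, two entries of a generic matrix polynomial have nonzero resultant. So a small coefficient perturbation, followed by right multiplication by $(\int_\T Q^*Q\,dz)^{-1/2}$, already gives a regular polynomial of full degree. The factor $h$ is superfluous; a generic perturbation of $hQ$ would erase its zeros anyway. This is a genuinely different and simpler route than the paper's. The paper perturbs the Kraus operators to $\lambda_i^{1/2}A_i$ with $A_i\in SL_m(\C)$. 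It interpolates them by a polynomial $P$ with $\det P\equiv 1$, using elementary matrices and Lagrange interpolation. It then multiplies by the scalar polynomials $q_n$ of \cite{Hek}, which have all roots on $\T$ and satisfy $|q_n|^2dz\to\sum_i\lambda_i\delta_{x_i}$, so the common zeros of $q_nP$ are exactly the roots of $q_n$.

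\textbf{The case $m=1$.} The paper's argument is uniform in $m$; yours is not. For $m=1$, a scalar polynomial of degree $n-1\ge1$ has $n-1$ roots, and no small perturbation moves them onto $\T$. There you must invoke \cite{Hek} directly.

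\textbf{Step 1.} Your discretization is a valid, more direct substitute for the extreme-point lemma plus Krein--Milman. Your guess of at most $m$ nodes at an extreme point is wrong (up to $m^2$ can occur), but you never use it.
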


A different way of asserting that for a large $n$, the convex cone of all completely positive maps from $\mathcal T_{n,m}$ to $M_m(\mathbb C)$ is nearly the whole of $\Ym$ is as follows.

\begin{defn}
	For each $n \in \mathbb{N}$, define 
	\[
	\Bmn = \left\{ \Phi_{Q_V} \in \mathcal{Y}_m :  Q_V \text{ is regular, } \text{deg}(Q_V) = n - 1 \text{ and } \int_\T Q_V(z)^*Q_V(z)\;dz=1\right\}.
	\]
\end{defn}

\begin{theorem}\label{thm:hauscong}
	 The Hausdorff distance between $\Bmn$ and $\Ym$ converges to zero as the polynomial degree $n$ approaches infinity: 
	\[
	\lim_{n \to \infty} d_H(\Bmn, \Ym) = 0.
	\]
\end{theorem}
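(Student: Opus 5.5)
Since $\Bmn \subseteq \Ym$, the Hausdorff distance reduces to the one-sided quantity
\[
d_H(\Bmn,\Ym)=\sup_{\Phi\in\Ym}\ \inf_{\Psi\in\Bmn}\rho(\Phi,\Psi).
\]
So it suffices to show that, for every $\epsilon>0$, there is $N$ such that for all $n\ge N$ every $\Phi\in\Ym$ lies within $\epsilon$ of $\Bmn$. The plan combines three ingredients: compactness of $(\Ym,\rho)$, Theorem~\ref{thm:existenceQ_V} applied at finitely many points, and a way to move from one fixed degree to every larger degree.

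\textbf{Step 1 (finite net).} The metric $\rho$ induces the BW topology, and $\Ym$ is BW-compact, so $(\Ym,\rho)$ is a compact metric space. Choose a finite $\epsilon/3$-net $\Phi_1,\dots,\Phi_k$ in $\Ym$. For each $j$, Theorem~\ref{thm:existenceQ_V} gives a regular polynomial $Q_j$ with $\rho(\Phi_j,\Phi_{Q_j})<\epsilon/3$; say $Q_j$ has degree $n_j-1$, so that $\Phi_{Q_j}\in\mathcal B_m^{(n_j)}$.

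\textbf{Step 2 (raising the degree, the main obstacle).} We need, for each fixed regular $Q$ of degree $d$ and each $n\ge d+1$, an element of $\Bmn$ within $\epsilon/3$ of $\Phi_Q$ (uniformly in $n\ge N_j$ suffices). The first thing to try is multiplying by a scalar polynomial with roots on $\T$. Take $p_r(z)$ of degree $r=n-1-d$, normalized so that $\int|p_r|^2dz=1$ and $|p_r|^2$ is a Fej\'er-type kernel concentrated near $z=1$, for instance $p_r(z)=(1+z)^r/\|(1+z)^r\|_2$. Then set $Q'(z)=p_r(z)\,Q(z)$.

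This $Q'$ has the required properties. It has degree $n-1$ (its leading coefficient is a nonzero multiple of that of $Q$), and its roots are the roots of $Q$ together with $-1$, all on $\T$. However, $\Phi_{Q'}(f)=\int |p_r|^2\,Q^*fQ\,dz$, which concentrates near $1$ and so is \emph{not} close to $\Phi_Q$. The concentration should therefore be avoided: we want $|p_r|^2\to1$ weakly in the appropriate sense.

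Better choices are $p_r(z)=z^r$, or the rotation trick. The factor $z^r$ has root $0\notin\T$, so it is inadmissible. Instead, use the spreading idea of the scalar case (\cite{Hek}, Section~3.1): take $p_r(z)=(z^r-\lambda)/\sqrt{1+|\lambda|^2}$ with $|\lambda|=1$. All its roots lie on $\T$ and $|p_r|^2=1-\mathrm{Re}(\bar\lambda z^r)$. Hence
\[
\Phi_{Q'}(f)-\Phi_Q(f)=-\int \mathrm{Re}(\bar\lambda z^r)\,Q^*fQ\,dz .
\]
This tends to $0$ uniformly for $f\in\mathcal L$ by a Riemann--Lebesgue estimate: $Q^*fQ$ has uniformly bounded derivative for $f\in\mathcal L$, so its $r$-th Fourier coefficients are $O(1/r)$. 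This gives the needed uniformity in $f$.

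Normalization and unitality hold automatically, since $\Phi_{Q'}(1)=\Phi_Q(1)+O(1/r)$. To get exact unitality, I would instead note that $\Phi_{Q'}(1)=I-\mathrm{Re}\bigl(\bar\lambda\,\widehat{Q^*Q}(-r)\bigr)$, and $\widehat{Q^*Q}(-r)=0$ once $r>d$. So for $n\ge 2d+2$ the map is exactly unital and lies in $\Bmn$. One should also double-check that the purity/regularity conditions of Theorem~\ref{TheMainTheorem} (degree exactly $n-1$ and roots on $\T$) are what membership in $\Bmn$ requires; they hold for $Q'$.

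\textbf{Step 3 (conclusion).} For each $j$ there is $N_j$ such that for $n\ge N_j$ some $\Psi_{j,n}\in\Bmn$ satisfies $\rho(\Phi_{Q_j},\Psi_{j,n})<\epsilon/3$. Put $N=\max_j N_j$. Given any $\Phi\in\Ym$ and $n\ge N$, pick $j$ with $\rho(\Phi,\Phi_j)<\epsilon/3$; then
\[
\rho(\Phi,\Psi_{j,n})\le\rho(\Phi,\Phi_j)+\rho(\Phi_j,\Phi_{Q_j})+\rho(\Phi_{Q_j},\Psi_{j,n})<\epsilon,
\]
so $d_H(\Bmn,\Ym)\le\epsilon$ for all $n\ge N$.
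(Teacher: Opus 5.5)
Your proof is correct. Its skeleton is the same as the paper's: reduce to the one-sided distance because $\Bmn\subseteq\Ym$, take a finite net from compactness of $(\Ym,\rho)$, and close with a triangle inequality using $N=\max_j N_j$.

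The genuine difference is how you obtain approximants in \emph{every} sufficiently large degree. The paper simply asserts that Theorem~\ref{thm:existenceQ_V} gives, for each net point $\Phi_i$, an $N_i$ and an element of $\Bmn$ within $\epsilon/2$ of $\Phi_i$ for all $n\ge N_i$. As stated, that theorem produces only one regular polynomial of uncontrolled degree. The paper is therefore tacitly reusing its proof: there $Q_n=q_nP\,C_n^{-1/2}$ has degree $\deg q_n+\deg P$, the scalar polynomials $q_n$ from Proposition~3.5 of \cite{Hek} run through all large degrees, and only the term $\rho(\Phi',\Phi_n)$ depends on $n$.

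You instead use Theorem~\ref{thm:existenceQ_V} as a black box and add a separate degree-raising step. Multiplying a normalized regular $Q$ of degree $d$ by $(z^r-\lambda)/\sqrt2$, with $r=n-1-d$:
\begin{itemize}
\item keeps every root on $\T$;
\item gives degree exactly $n-1$;
\item is exactly unital once $r>d$, because $Q^*Q$ only has frequencies in $[-d,d]$;
\item moves $\Phi_Q$ by $O(1/r)$ uniformly over $\mathcal L$, by the Fourier-coefficient estimate.
\end{itemize}

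What your route buys is modularity. It does not depend on the internals of the proof of Theorem~\ref{thm:existenceQ_V} or on which degrees the $q_n$ have. It needs no renormalization by $C_n^{-1/2}$, and it gives an explicit rate. It also gets the degree bookkeeping exactly right, whereas the paper writes ``degree $n$'' where membership in $\Bmn$ requires degree $n-1$. The paper's route is shorter on the page but leaves this step implicit.

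One cosmetic point: your remark that unitality holds ``automatically'' because $\Phi_{Q'}(1)=\Phi_Q(1)+O(1/r)$ only gives approximate unitality. The exact statement is the one you give next, valid for $r>d$.
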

	
 Although we shall not use it in this note, for the sake of beauty we mention that counting dimensions and using the methods of ~\cite{Far_TAMS}, it is straightforward that $A(z) \rightarrow L_A$ given by $L_A(T) = $ Trace ($\sum_{k=-n+1}^{n-1} A_k T_{-k})$ for $T = ((T_{i-j})) \in \cT_{n,m} $ establishes a complete order isomorphism between the dual $\cT_{n,m}^d$ (for dual of an operator system see ~\cite{CE}) and $C^*(\mathbb Z , M_m(\mathbb C))_{(n)}$.

	\section{Positive semidefinite Block Toeplitz matrices}

This section gives a quick description of positive elements of $\mathcal{T}_{n,m}$, a different one from the characterization in ~\cite{Far_TAMS}.	Since the block Toeplitz matrices contain the $mn \times mn$ Toeplitz matrices and the $C^*$- algebra generated by $mn \times mn$ Toeplitz matrices is $M_{mn}(\C)$, the $C^*$- algebra generated by $\cT_{n,m}$ is the algebra of matrices $M_{mn}(\C)$.
	
	In the following, we decode the structure of a positive semidefinite element $T$ of $\cT_{n,m}$. Suppose $\operatorname{rank}(T)=r$. Then there exists
	$Y \in M_{mn,r}(\mathbb{C})$ such that $ T =YY^*.$
	Write
	\[
	Y
	=
	\begin{pmatrix}
		Y_0^* & \cdots & Y_{n-1}^*
	\end{pmatrix}^*,
	\qquad
	Y_j \in M_{m,r}(\mathbb{C}), \; j = 0,\ldots,n-1.
	\]
	Define the upper and the lower submatrices
	\[
	Y_U
	=
	\begin{pmatrix}
		Y_0^* & \cdots & Y_{n-2}^*
	\end{pmatrix}^*
	\text{	and } 
	Y_L
	=
	\begin{pmatrix}
		Y_1^* & \cdots & Y_{n-1}^*
	\end{pmatrix}^*.
	\]
	By the block Toeplitz structure of $T$, we have $ Y_U Y_U^* = Y_L Y_L^*$. 
	Thus there exists a unitary matrix $U \in M_r(\mathbb{C})$ such that $ 	Y_L = Y_U U$. It follows that
	\[
	Y
	=
	\begin{pmatrix}
		Y_0^* &
		(Y_0 U)^* &
		\cdots &
		(Y_0 U^{\,n-1})^*
	\end{pmatrix}^*.
	\tag{1}
	\]
	Hence, for every positive semidefinite block Toeplitz matrix $T$, there exists $Y_0 \in M_{m,r}(\C)$ and a unitary $U \in M_r(\C)$ such that 
	$ 	T= ABB^*A^*,$
	where
	\[
	A
	=
	\begin{pmatrix}
		Y_0 & 0 & \cdots & 0 \\
		0 & Y_0 & \cdots & 0 \\
		\vdots & \vdots & \ddots & \vdots \\
		0 & 0 & \cdots & Y_0
	\end{pmatrix}
	\in M_{mn,rn}(\mathbb{C})
	\text{ 	and } 
	B
	=
	\begin{pmatrix}
		I \\
		U \\
		\vdots \\
		U^{n-1}
	\end{pmatrix}
	\in M_{rn,r}(\mathbb{C}).
	\]
	So, $T$ is of the form
	
	\[
	T
	=
	\begin{pmatrix}
		Y_0 &        &        \\
		& \ddots &        \\
		&        & Y_0
	\end{pmatrix}
	\begin{pmatrix}
		I \\
		U \\
		\vdots \\
		U^{\,n-1}
	\end{pmatrix}
	\begin{pmatrix}
		I & U^* & \cdots & (U^{\,n-1})^*
	\end{pmatrix}
	\begin{pmatrix}
		Y_0^* &        &        \\
		& \ddots &        \\
		&        & Y_0^*
	\end{pmatrix}.
	\]
	Getting hold of a unitary $U_0$ such that $U=U_0 D U_0^*$ where $D = \text{diag}(\lambda_1, \lambda_2, \ldots, \lambda_r)$ with each $\lambda_i \in \T$ and letting $Y = Y_0U_0 \in M_{m,r}(\C)$, we have
	
	\[
	T
	=
	\begin{pmatrix}
		Y &        &        \\
		& \ddots &        \\
		&        & Y
	\end{pmatrix}_{mn\times rn}
	\begin{pmatrix}
		I \\
		D \\
		\vdots \\
		D^{\,n-1}
	\end{pmatrix}_{rn \times r}
	\begin{pmatrix}
		I & D^* & \cdots & (D^{\,n-1})^*
	\end{pmatrix}_{r \times rn}
	\begin{pmatrix}
		Y^* &        &        \\
		& \ddots &        \\
		&        & Y^*
	\end{pmatrix}_{rn \times mn}.
	\]
	
	The reduction to one variable case is noteworthy. Let $T$ be an $n \times n$ positive semidefinite  Toeplitz matrix of rank$(T) = r$. Then the above form of $T$ can be reduced to the given Toeplitz matrix by taking $m=1$. So letting $ Y =(y_1, y_2, \ldots, y_r)$, we introduce the $n \times r $ Vandermonde matrix \[W= \begin{pmatrix}
		1 & 1 & 1 & \cdots & 1 \\
		\lambda_1 & \lambda_2 & \lambda_3 & \cdots & \lambda_r \\
		\lambda_1^2 & \lambda_2^2 & \lambda_3^2 & \cdots & \lambda_r^2 \\
		\vdots & \vdots & \vdots & \ddots & \vdots \\
		\lambda_1^{n-1} & \lambda_2^{n-1} & \lambda_3^{n-1} & \cdots & \lambda_r^{n-1}
	\end{pmatrix} \in M_{n,r}(\C).\]
	
	Then by computations it can be easily verified that $T$ decomposes as \[ T=W \begin{pmatrix}
		|y_1|^2 &        &        \\
		& \ddots &        \\
		&        & |y_r|^2
	\end{pmatrix} W^*.\]

	\section{Pure ucp maps} \label{PureUCP}
	Given an operator system $\mathcal{S}$, let $\mathrm{CP}_\mathcal{K}(\mathcal{S})$ denote the cone of completely positive ({\it{cp}}) maps from $\mathcal{S}$ to $\mathcal{B}(\mathcal{K})$, and let $\mathrm{UCP}_\mathcal{K}(\mathcal{S})$ be the convex subset of \cp maps that are unital. A map $\phi \in \mathrm{UCP}_\mathcal{K}(\mathcal{S})$ is \emph{pure} if whenever $\phi - \psi$ is \cp (we write
	$\phi \succeq \psi$ in this case) for some $\psi \in
	\mathrm{CP}_\mathcal{K}(\mathcal{S})$, then there exists
	$0 \le t \le 1$ such that \[
	\psi = t \phi.\]
	If $\mathcal{K}$ is one-dimensional, a pure \ucp map from $\mathcal{S}$ to
	$\mathcal{B}(\mathcal{K})$ is just a pure state and when $\mathcal{K}$ is
	finite-dimensional, the elements of $\mathrm{UCP}_\mathcal{K}(\mathcal{S})$ are called \emph{matrix states}.
	
	\medskip
	
	A pure \ucp map cannot be written as a non-trivial convex combination of other \ucp maps, i.e., if $\phi$ is a pure \ucp map and $\phi = \lambda\phi_1 + (1-\lambda)\phi_2$ for some $0<\lambda <1$ then $\phi = \phi_1 = \phi_2$. Hence, $\phi$ is an extreme point of the convex set $\mathrm{UCP}_\mathcal{K}(\mathcal{S})$. A map $\phi$ from $\mathcal{S}$ to $\mathbb C$ is a pure state if and only if $\phi$ is an extreme point of the state space.
	
	\smallskip 
	
	In this paper, we shall consider $\cS$ to be space of all block Toeplitz matrices $\cT_{n,m}$ and $\mathcal{K}$ to be of dimension $m$.

	\begin{lem}\label{lem:pureV}
		Let $\phi$ be a pure \ucp map from $\cT_{n,m}$ to $M_{m}(\C)$. Then there exists an isometry $V \in M_{mn, m}(\C)$ such that $\phi(A) = V^*AV, \; \forall \;  A\in \cT_{n,m}$. 
	\end{lem}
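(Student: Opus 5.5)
The plan is to combine Arveson's extension theorem with Stinespring's dilation theorem, and then use purity to cut the dilation down to an isometry $\mathbb C^m \to \mathbb C^{mn}$.

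First I extend $\phi$ to a \ucp map $\tilde\phi$ on $M_{mn}(\C)$. This is possible by Arveson's extension theorem, since, as observed in the previous section, the $C^*$-algebra generated by $\cT_{n,m}$ is $M_{mn}(\C)$. Among all \ucp extensions of $\phi$ I choose one that is pure in $\mathrm{UCP}_{\C^m}(M_{mn}(\C))$. Such a choice exists because the set of \ucp extensions is a nonempty compact convex face-like subset: it is compact and convex in a finite-dimensional space, so it has an extreme point $\tilde\phi$ by Krein--Milman.

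Next I check that this extreme point is extreme in all of $\mathrm{UCP}_{\C^m}(M_{mn}(\C))$. Suppose $\tilde\phi=\lambda\psi_1+(1-\lambda)\psi_2$ with $\psi_i$ \ucp. Restricting to $\cT_{n,m}$ writes $\phi$ as a convex combination of the restrictions. Since $\phi$ is pure, it is extreme, so each $\psi_i|_{\cT_{n,m}}=\phi$. Hence the $\psi_i$ are extensions of $\phi$, and extremality of $\tilde\phi$ within the set of extensions forces $\psi_i=\tilde\phi$.

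On the $C^*$-algebra $M_{mn}(\C)$, extreme points of the \ucp maps into $M_m$ are characterized by Arveson's criterion. What we really need, though, is purity in the cone sense, i.e.\ that the minimal Stinespring representation is irreducible. To get this, I will rerun the argument with the cone order instead of convex combinations. Suppose $\psi\le\tilde\phi$ is \cp. Then $\psi|_{\cT_{n,m}}\le\phi$, so $\psi|_{\cT_{n,m}}=t\phi$. If $0<t<1$, then $\tilde\phi=t(\psi/t)+(1-t)\big((\tilde\phi-\psi)/(1-t)\big)$ is a convex combination of \ucp extensions of $\phi$, and these must both equal $\tilde\phi$, giving $\psi=t\tilde\phi$. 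The edge cases $t=0,1$ are handled directly: evaluating at the identity gives $\psi(I)=0$, hence $\psi=0$, respectively $(\tilde\phi-\psi)(I)=0$. This shows $\tilde\phi$ is pure.

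Now let $\tilde\phi(A)=W^*\pi(A)W$ be the minimal Stinespring dilation. Purity of $\tilde\phi$ makes $\pi$ irreducible. The only irreducible representation of $M_{mn}(\C)$ is, up to unitary equivalence, the identity representation on $\C^{mn}$. Absorbing the unitary into $W$ gives $\tilde\phi(A)=V^*AV$ with $V\in M_{mn,m}(\C)$. Unitality gives $V^*V=I_m$, so $V$ is an isometry. Restricting to $\cT_{n,m}$ yields the lemma.

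The main obstacle is the passage from extremality among extensions to purity (irreducibility) of $\tilde\phi$; the argument with $t$ above is where care is needed.
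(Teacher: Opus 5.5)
Your proof is correct and follows the same route as the paper: first obtain a pure \ucp extension of $\phi$ to $M_{mn}(\C)$, then use the fact that pure \ucp maps on $M_{mn}(\C)$ have the form $V^*AV$. The paper cites Farenick's Theorem B for the first step and treats the second as folklore, whereas you prove both directly (an extreme point of the set of extensions plus the cone argument with $t$, then irreducibility of the minimal Stinespring representation). Note that your cone argument uses only the extremality of $\tilde\phi$ within the set of extensions, so the intermediate step showing $\tilde\phi$ is extreme in all of $\mathrm{UCP}_{\C^m}(M_{mn}(\C))$ can be dropped.
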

	
	\begin{proof}
		 This can be proved as a straightforward application of Theorem B of \cite{Far_JLMS} which extends $\phi$ to the whole of $M_{mn}(\C)$ as a pure map. And that all pure \ucp maps from $M_{mn}(\C)$ to $M_{m}(\C)$ are of the form $V^*AV$ is a folklore.
	\end{proof}

 By applying the representation established in Lemma \ref{lem:pureV}, we now evaluate the action of pure \ucp map $\phi$ on a positive block Toeplitz matrix $T \in \cT_{n,m}$.
	
	\smallskip
	Let $\phi$ be a completely positive map from $\cT_{n,m}$ to $M_m(\C)$. Then there exists $V \in M_{mn,m}(\C)$ such that $\phi(T) =V^*TV$. Take 
	$V= (	V_1^*, 	V_2^*, \ldots 	, V_n^*)^*$, where $V_i \in M_m(\mathbb C)$. Then, for a positive element $T$ of $\cT_{n,m}$, we have 
	\begin{align*}
		\phi(T) 
			%& = \begin{pmatrix}
			%V_1^* & V_2^* & \cdots & V_n^*
		%\end{pmatrix}\begin{pmatrix}
			%Y &        &        \\
			%& \ddots &        \\
			%&        & Y
		%\end{pmatrix}
		%\begin{pmatrix}
			%I \\
			%D \\
			%\vdots \\
			%D^{\,n-1}
		%\end{pmatrix} \\
		%& \begin{pmatrix}
			%I & D^* & \cdots & (D^{\,n-1})^*
		%\end{pmatrix}
		%\begin{pmatrix}
			%Y^* &        &        \\
			%& \ddots &        \\
			%&        & Y^*
		%\end{pmatrix}
		%\begin{pmatrix}
			%V_1 \\
			%V_2 \\
			%\vdots \\
			%V_n
		%\end{pmatrix} \\
		&= \begin{pmatrix}
			V_1^*Y & V_2^*Y & \cdots & V_n^*Y
		\end{pmatrix}
		\begin{pmatrix}
			I \\
			D \\
			\vdots \\
			D^{\,n-1}
		\end{pmatrix}
		\begin{pmatrix}
			I & D^* & \cdots & (D^{\,n-1})^*
		\end{pmatrix}
		\begin{pmatrix}
			Y^*V_1 \\
			Y^*V_2 \\
			\vdots \\
			Y^*V_n
		\end{pmatrix} \\
		&=\left(\sum\limits_{i=1}^n V_i^*YD^{i-1}\right)\left(\sum\limits_{j=1}^n D^{*(j-1)}Y^*V_j\right)=\sum\limits_{i,j=1}^n V_i^*YD^{i-j}Y^*V_j. \end{align*}
	Taking $D=\text{diag}(\lambda_1, \lambda_2,\ldots,\lambda_r)$ and $Y=(y_1, y_2, \ldots, y_r)$ with $y_k \in \C^{m}$ and $\lambda_k \in \T$,
	\[YD^{i-j}Y^* = \sum\limits_{k=1}^r y_k\lambda_k^{i-j}y_k^* .\]	
	Hence, $\phi(T)=\sum\limits_{k=1}^r\sum\limits_{i,j=1}^n \lambda_k^{i-j} V_i^* y_ky_k^*V_j = \sum\limits_{k=1}^r Q_V(\lambda_k)^*y_ky_k^*Q_V(\lambda_k)$.

\smallskip
Thus, the value of $\phi(T)$ for every $T \in \cT_{n,m}$ is completely determined by evaluating the matrix-valued polynomial $Q_V$ at the points $\lambda_k \in \T$ corresponding to $T$.
	
	%\begin{defn}
		%Let $Q_V$ denote the $M_m(\mathbb C)$ valued polynomial $Q_V(z) = \sum\limits_{i=1}^n z^{n-i}V_i $.
	%\end{defn}

	%	\[
	%	Q_v(z) = \sum\limits_{k=0}^{n-1}v_k z^{n-k-1} \; \text{and}\; \tilde{Q_v}(z) = z^{n-1} \bar{Q_v(\frac{1}{\bar{z}})} = z^{n-1} \sum_{k=0}^{n-1} \bar{v}_k\, z^{n-k-1}.
	%	\]
	%	Note that if $\lambda_1, \lambda_2,\ldots, \lambda_{n-1}$ are zeroes of polynomial $Q_v$ then $\frac{1}{\bar{\lambda_1}}, \frac{1}{\bar{\lambda_2}}, \ldots, \frac{1}{\bar{\lambda_{n-1}}}$ are zeroes of $\tilde{Q_v}$.

	\section{Consequences of pureness of $\varphi$}
	
	In this section, we will prove parts (ii) and (iii) of Theorem ~\ref{TheMainTheorem} under the assumption that condition (i) holds. 
	
	%\begin{lem}\label{Vn_neq_0}
	%	Let  $\varphi:\mathcal{T}_{n,m} \to M_m(\mathbb{C})$ be a pure UCP map. Then there exists an isometry $V = (V_1, V_2, \ldots, V_n)^t$ such that $V_1 \ne 0,\;V_n \neq 0$ and $\varphi(T)=V^*TV$. Thus the degree of the polynomial $Q_V$ is the highest. 
	%\end{lem}
	%\begin{proof}
		%We already know that an isometry $W = (W_1, W_2, \ldots, W_n)^t$ exists such that  $\varphi(T)=W^*TW$. If $W_n= 0$, let $k$ be the greatest integer, $1 \le k \le n$ such that $W_k \neq 0$. Define $V$ to be the isometry such that $V_1 = V_2 = \cdots = V_{n-k} = 0, V_{n-k+1} = W_1, \ldots V_n = W_k$. Then $V_n \neq 0$ and $\varphi(T)=V^*TV$. 
	%\end{proof}
	
	% \begin{thm} \label{thm:highest}
		% 		Let  $\varphi:\mathcal{T} \to M_m(\mathbb{C})$ be a pure UCP map. Then there exists an isometry $V = (V_1, V_2, \ldots, V_n)^t$ such that $V_1 \neq 0$, $V_n \neq 0$ and $\varphi(T)=V^*TV$. Thus the degree of the polynomial $Q_V$ is the highest. 
		% {\color{red}Moreover, if there exists another isometry $W$ such that $W^*TW = V^*TV$ for all $T \in \cT$ then $V = \lambda W$ for some $|\lambda|=1$.}
		% \end{thm}
	
	% \begin{proof}
		
		% From Lemma \ref{Vn_neq_0}, get an isometry $V = (V_1 , V_2 , \ldots , V_n)^t $ such that $V_n \neq 0$ and $\varphi(T)=V^*TV$.
		
		\begin{lem}\label{Vn_neq_0}
			Let $W=(W_1, W_2, \ldots, W_n)^t $ be an isometry and $\varphi:\mathcal{T}_{n,m} \to M_m(\mathbb{C})$ be a \ucp map defined as $\varphi(T)=W^*TW$ then there exists an isometry $V = (V_1, V_2, \ldots, V_n)^t$ such that $V_n \neq 0$ and $\varphi(T)=V^*TV$.
		\end{lem}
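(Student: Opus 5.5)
The plan is to shift the nonzero blocks of $W$ downward, exploiting the shift invariance of block Toeplitz matrices.

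If $W_n \neq 0$, take $V = W$ and there is nothing to prove. Otherwise let $k$ be the largest index with $W_k \neq 0$. Such a $k$ exists because $W^*W = \sum_i W_i^* W_i = I_m$, so not all $W_i$ vanish; moreover $k<n$. Define $V$ by
\[
V_1 = \cdots = V_{n-k} = 0, \qquad V_{n-k+j} = W_j \quad (j=1,\ldots,k).
\]
Then $V_n = W_k \neq 0$. Also $V^*V = \sum_{j=1}^k W_j^*W_j = W^*W = I$, so $V$ is an isometry.

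It remains to check that $V^*TV = W^*TW$ for every $T = ((T_{i-j})) \in \cT_{n,m}$. Using $W_i = 0$ for $i>k$,
\[
W^*TW = \sum_{i,j=1}^k W_i^* T_{i-j} W_j ,
\]
\[
V^*TV = \sum_{i,j=1}^k V_{n-k+i}^* T_{(n-k+i)-(n-k+j)} V_{n-k+j} = \sum_{i,j=1}^k W_i^* T_{i-j} W_j .
\]
The two sums agree because the Toeplitz entry depends only on the difference of indices, and that difference is unchanged by a common shift. Equivalently, $Q_V(z) = \sum_i z^{n-i}V_i = \sum_{j=1}^k z^{k-j}W_j = z^{-(n-k)}Q_W(z)$. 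Since $|z|=1$ on $\T$, we get $Q_V^* F_T Q_V = Q_W^* F_T Q_W$ there, and the equality follows from \eqref{phiFandintegral}.

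The only step requiring care is this index bookkeeping, namely confirming that the shift by $n-k$ stays inside $\{1,\ldots,n\}$, which holds since $1\le k\le n$. I do not expect any genuine obstacle.
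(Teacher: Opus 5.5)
Your proof is correct and follows the paper's argument: when $W_n=0$, take the largest $k$ with $W_k\neq 0$ and shift the blocks down by $n-k$; this leaves $V^*TV$ unchanged because the blocks of a block Toeplitz matrix depend only on index differences. You also spell out two verifications that the paper leaves implicit, namely that $V$ is an isometry (via $\sum_i W_i^*W_i=I_m$) and that $V^*TV=W^*TW$.
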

		
		\begin{proof}
			If $W_n= 0$, let $k$ be the greatest integer, $1 \le k < n$ such that $W_k \neq 0$. Define $V$ to be the isometry such that $V_1 = V_2 = \cdots = V_{n-k} = 0, V_{n-k+1} = W_1, \ldots, V_n = W_k$. Then $V_n \neq 0$ and $\varphi(T)=V^*TV$. 
		\end{proof}
		
More can be said when the map $\varphi$ is pure.		
		
		\begin{lem}\label{V1_neq_0}
			Let  $\varphi:\mathcal{T} _{n,m} \to M_m(\mathbb{C})$ be a \ucp map defined by an isometry $V = (V_1, V_2, \ldots, V_n)^t$ with $V_n \neq 0$ such that $\varphi(T)=V^*TV$. If $\varphi$ is a pure map then $V_1 \neq 0$. Thus the degree of the polynomial $Q_V$ is the highest. 
		\end{lem}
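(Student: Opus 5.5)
The plan is to argue by contradiction. Suppose $\varphi$ is pure, $V_n \neq 0$, but $V_1 = 0$. The vanishing of $V_1$ gives a second implementing operator, obtained by shifting $V$. Purity will force the two to be proportional in a sense that propagates down the blocks and kills $V_n$.

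\textbf{Step 1 (the shifted operator).} Define $W = (V_2, V_3, \ldots, V_n, 0)^t$, i.e. $W_j = V_{j+1}$ for $j<n$ and $W_n = 0$. Because $V_1 = 0$, the block Toeplitz structure $T_{i-j}$ depends only on $i-j$, so
\[
W^*TW = \sum_{i,j=1}^{n-1} V_{i+1}^* T_{i-j} V_{j+1} = V^*TV = \varphi(T), \qquad T \in \cT_{n,m}.
\]

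\textbf{Step 2 (a cp map dominated by $\varphi$).} For $c \in \{1, i\}$ set
\[
\psi_c(T) = \tfrac14 (V + cW)^* T (V + cW).
\]
Expanding and using Step 1 gives
\[
\varphi - \psi_c = \tfrac14 (V - cW)^* (\cdot) (V - cW),
\]
which is completely positive, so $\varphi \succeq \psi_c$. Purity then yields $t_c \in [0,1]$ with $\psi_c = t_c \varphi$. Again by Step 1,
\[
\psi_c(T) = \tfrac12 \varphi(T) + \tfrac14\bigl(c\, V^*TW + \bar c\, W^*TV\bigr).
\]
Taking $c = 1$ and $c = i$ shows that both $V^*TW + W^*TV$ and $V^*TW - W^*TV$ are scalar multiples of $\varphi(T)$. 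Hence there is a constant $\alpha$ with
\[
V^*TW = \alpha\, \varphi(T) \quad \text{for all } T \in \cT_{n,m}.
\]

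\textbf{Step 3 (propagation, the main step).} We test this identity on block Toeplitz matrices $T$ whose only nonzero symbol entry is $T_{n-k} = A$, for an arbitrary $A \in M_m(\C)$. We show by induction on $k = 1, \ldots, n-1$ that $V_1 = \cdots = V_k = 0$ implies $V_{k+1} = 0$.

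For such $T$ we have $\varphi(T) = \sum_{i - j = n-k} V_i^* A V_j$. Every term has $j \le k$, so $\varphi(T) = 0$.

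On the other hand,
\[
V^*TW = \sum_{i - l + 1 = n-k} V_i^* A V_l, \qquad l = j+1 \ge 2.
\]
Here $l \le k+1$, and all terms with $l \le k$ vanish by the inductive hypothesis. What remains is $V_n^* A V_{k+1}$.

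So $V_n^* A V_{k+1} = 0$ for every $A \in M_m(\C)$. Since $V_n \neq 0$, choosing rank-one matrices $A$ forces $V_{k+1} = 0$.

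Running the induction up to $k = n-1$ (which uses $T_1$) gives $V_n = 0$, contradicting the hypothesis. Therefore $V_1 \neq 0$. Together with $V_n \neq 0$, this means $Q_V$ has nonzero leading and constant coefficients, so it has degree $n-1$.

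The delicate part is the index bookkeeping in Step 3. One has to check that the cross term $V^*TW$ picks up exactly one surviving term at each stage, while $\varphi(T)$ vanishes entirely.
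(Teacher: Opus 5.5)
Your proof is correct and follows essentially the paper's route: the same shifted operator $W=(V_2,\ldots,V_n,0)^t$, the same dominated map $\tfrac14(V+W)^*(\cdot)(V+W)$, and the same test matrices with a single nonzero diagonal block $A$. The paper takes the smallest index $k$ with $V_k\neq 0$ and tests on the matrix whose only nonzero diagonal is $T_{n-k+1}=A$. There $\varphi$ vanishes while $\psi=\tfrac14 V_n^*AV_k\neq 0$, because the other cross term $W^*TV$ also vanishes on that matrix. So your second perturbation $c=i$ and your induction are harmless but unnecessary.
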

	
		\begin{proof}
			Let  $\varphi$ be a pure \ucp map. If $V_1 = 0$, then for any block Toeplitz matrix  $T = ((T_{i-j})) \in \mathcal{T}_{n,m}$, where each $T_{k} \in M_m(\mathbb{C})$, we have
		$$\phi(T) = V^* T V = \sum_{i=2}^n \sum_{j=2}^n V_i^* T_{i-j} V_j.$$
		Shifting the indices by letting $i' = i-1$ and $j' = j-1$, the indices $i', j'$ run from $1$ to $n-1$, 
		\[ 
		\phi(T) = \sum_{i'=1}^{n-1} \sum_{j'=1}^{n-1} V_{i'+1}^* T_{i'-j'} V_{j'+1}. 
		\] 
		Consider $U \in M_{mn, m}(\mathbb{C})$ defined by $U = (V_2 , V_3 , \ldots , V_n , 0 )^t$. Then we have $U_k = V_{k+1}$ for $k \le n-1$, and $U_n = 0$. Thus, this shifted summation matches $U^* T U$ identically, i.e., 
		\[ 
		\phi(T) = V^* T V = U^* T U \quad \text{for all } T \in \mathcal{T}_{n,m}. \] 
		Define a linear map $\psi : \mathcal{T}_{n,m} \to M_m(\mathbb{C})$ by
		\[ 
		\psi(T) = \frac{1}{4} (V+U)^* T (V+U). 
		\] 
		Then, clearly $\psi$ is a completely positive map.
		Now,
		% \[ 
		% \phi(T) - \psi(T) = \frac{1}{2}V^* T V + \frac{1}{2}U^* T U - \frac{1}{4}\left( V^* T V + U^* T U + V^* T U + U^* T V \right)
		% \] 
		\[ 
		\phi(T) - \psi(T) = \frac{1}{4} (V-U)^* T (V-U). 
		\] 
		Thus $\psi \le \phi$. Now we will show that $\psi$ is not a multiple of $\phi$. Let $k \in \{2, 3, \dots, n\}$ be the smallest integer such that $V_k \neq 0$. This implies that $V_1 = V_2 = \dots = V_{k-1} = 0$ and $V_k \neq 0$. Also, for an isometry $U$ we get $U_1 = \dots = U_{k-2} = 0$ and $U_{k-1} = V_k \neq 0$.

		Fix $\ell = n - k + 1$. Choose matrix $A \in M_m(\mathbb{C})$ such that $V_n^*AV_k \neq 0$, define the block Toeplitz matrix $T^{(\ell, A)} \in \mathcal{T}_{n,m}$ by setting its block diagonals to be
		\[
		T_r = 
		\begin{cases}
			A & \text{if } r = \ell, \\
			0 & \text{if } r \neq \ell.
		\end{cases}
		\]
		By computations, we get that $\varphi(T^{(\ell, A)}) = \sum_{j=1}^{k-1} V_{j + n - k + 1}^* A V_j = 0 $ and $\psi(T^{(\ell, A)}) = \frac{1}{4} V_n^* A V_k \neq 0.$
		If $\psi$ were a scalar multiple of $\varphi$ (i.e., $\psi = t \varphi$ for some $t \in \mathbb{C}$), we would have
		$\psi(T^{(\ell, A)}) = t \varphi(T^{(\ell, A)}) = t \cdot 0 = 0,$ which contradicts $\psi(T^{(\ell, A)}) \neq 0$. Thus, $\psi$ cannot be a scalar multiple of $\varphi$, so $V_1 \neq 0$. This completes the proof.
		\end{proof}
		 	 
The two lemmas above put together give the following.

	\begin{cor}\label{cor:highest}
		Let  $\varphi:\mathcal{T}_{n,m} \to M_m(\mathbb{C})$ be a pure \ucp map. Then there exists an isometry $V = (V_1, V_2, \ldots, V_n)^t$ such that $V_1 \ne 0,\;V_n \neq 0$ and $\varphi(T)=V^*TV$. Thus the degree of the polynomial $Q_V$ is the highest.
	\end{cor}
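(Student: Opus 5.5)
The corollary is obtained by chaining the three preceding results; no new estimates are needed. I will argue as follows.

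First, since $\varphi$ is pure, Lemma~\ref{lem:pureV} supplies an isometry $W=(W_1,\ldots,W_n)^t \in M_{mn,m}(\C)$ with $\varphi(T)=W^*TW$ for all $T\in\cT_{n,m}$. Lemma~\ref{Vn_neq_0} then lets me shift the nonzero blocks of $W$ downward. This gives a new isometry $V=(V_1,\ldots,V_n)^t$ with $V_n\neq 0$ that still implements $\varphi$. Two points need a quick check:
\begin{itemize}
\item $V$ is still an isometry, because $V^*V=\sum_i V_i^*V_i=\sum_i W_i^*W_i=I$.
\item $V$ still implements $\varphi$, because $T_{i-j}$ depends only on $i-j$, so a common shift of the indices leaves $\sum_{i,j}V_i^*T_{i-j}V_j$ unchanged.
\end{itemize}
In particular $W\neq 0$, so the largest $k$ with $W_k\neq0$ exists.

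Second, I apply Lemma~\ref{V1_neq_0} to this $V$. The map is pure and $V_n\neq 0$, so the lemma gives $V_1\neq 0$. The leading coefficient of $Q_V(z)=\sum_{i=1}^n z^{n-i}V_i$ is the coefficient of $z^{n-1}$, namely $V_1$. Hence $\deg Q_V=n-1$, the maximal possible degree. The condition $V_n\neq 0$ says that the constant term $Q_V(0)$ is nonzero, so $0$ is not a root of $Q_V$.

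The only point needing care is that the shift in Lemma~\ref{Vn_neq_0} must produce an isometry still satisfying the hypothesis of Lemma~\ref{V1_neq_0}; this is exactly the routine check listed above. There is no substantial obstacle beyond that.
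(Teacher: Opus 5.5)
Your proposal is correct and is exactly the paper's argument: the corollary is obtained by chaining Lemma~\ref{lem:pureV} (an isometry $W$ implementing $\varphi$), Lemma~\ref{Vn_neq_0} (shift the blocks so that $V_n\neq 0$), and Lemma~\ref{V1_neq_0} (purity then forces $V_1\neq 0$, so $\deg Q_V=n-1$). Your extra checks are precisely what the paper leaves implicit in Lemma~\ref{Vn_neq_0}: that the shift preserves $V^*V=I$, and that it preserves $\sum_{i,j}V_i^*T_{i-j}V_j$ because $T_{i-j}$ depends only on $i-j$.
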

		
The result about location of roots of $Q_V$ is now immediate by employing the method of Proposition 4.5 in ~\cite{CvS} and Proposition 2.12 of ~\cite{Hek}.

	 \begin{lem}\label{lem:roots}
		Let $\phi: \cT_{n,m} \rightarrow M_m(\C)$ be a pure \ucp map then there exists an isometry $V=(V_1,V_2,...,V_n)^t$ with $V_1, V_n \neq 0$ such that $\phi(T) = V^*TV$ and $Q_V(z) = \sum_{k=1}^n z^{n-k}V_k$ has all its roots on $\T$. 
	\end{lem}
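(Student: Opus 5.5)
Start from Corollary~\ref{cor:highest}: $\phi$ pure gives an isometry $V=(V_1,\dots,V_n)^t$ with $V_1,V_n\neq 0$ and $\phi(T)=V^*TV$. By \eqref{phiFandintegral},
\[
\phi(T)=\int_{\T}Q_V(z)^*F_T(z)Q_V(z)\,dz .
\]
A root of $Q_V$ means a point $\lambda\in\C$ with $Q_V(\lambda)=0$ as a matrix. Equivalently, $\lambda$ is a common root of all $m^2$ scalar entry polynomials, so $(z-\lambda)$ divides every entry. The argument is by contradiction. Suppose some root $\lambda$ of $Q_V$ satisfies $|\lambda|\neq 1$. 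Note that $\lambda\neq 0$ is automatic, since $Q_V(0)=V_n\neq0$. Write
\[
Q_V(z)=(z-\lambda)R(z),
\]
where $R$ is an $M_m(\C)$-valued polynomial of degree at most $n-2$. Its leading coefficient is $V_1\neq 0$, so the degree is exactly $n-2$.

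Following Proposition 4.5 of \cite{CvS} and Proposition 2.12 of \cite{Hek}, replace the factor $(z-\lambda)$ by its "reflection" $(1-\bar\lambda z)$. On $\T$ we have $|1-\bar\lambda z|=|z-\lambda|$. Hence
\[
\tilde Q(z)=(1-\bar\lambda z)R(z)
\]
is again a polynomial of degree at most $n-1$. It satisfies $\tilde Q(z)^*F\tilde Q(z)=Q_V(z)^*FQ_V(z)$ pointwise on $\T$ for every matrix-valued $F$, because the scalar factors commute with the matrix entries. So $\tilde Q=Q_{\tilde V}$ for some $\tilde V$, and \eqref{phiFandintegral} shows $\tilde V^*T\tilde V=\phi(T)$ for all $T$. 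In particular $\tilde V$ is also an isometry.

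Next consider the one-parameter family of intermediate factors. For $s,t\ge 0$, form the combination of the two representing vectors, as in the proof of Lemma~\ref{V1_neq_0}. Take
\[
\psi(T)=\tfrac14(V+\tilde V)^*T(V+\tilde V).
\]
Then $\phi-\psi=\tfrac14(V-\tilde V)^*T(V-\tilde V)$ is \cp, because $\tfrac12V^*TV+\tfrac12\tilde V^*T\tilde V=\phi(T)$. Purity then forces $\psi=t\phi$. The polynomial attached to $V+\tilde V$ is
\[
\bigl((z-\lambda)+(1-\bar\lambda z)\bigr)R(z)=(1-\bar\lambda)(z+1)R(z)\cdot c,
\]
up to a harmless rearrangement. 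More precisely, it equals $\bigl((1-\bar\lambda)z+(1-\lambda)\bigr)R(z)$, a scalar linear factor times $R$.

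The key step, and the main obstacle, is to extract a contradiction from $\psi=t\phi$. The identity says that
\[
\int_\T |a z+b|^2\,R^*FR\,dz=t\int_\T|z-\lambda|^2R^*FR\,dz
\]
for all trigonometric polynomials $F$ of degree at most $n-1$, with $a=1-\bar\lambda$ and $b=1-\lambda$. The difference $|az+b|^2-t|z-\lambda|^2$ is a real trigonometric polynomial of degree $1$; call it $g(z)=\alpha+\beta z+\bar\beta\bar z$. The aim is to show that $\int_\T g\,R^*FR\,dz=0$ for all $F$ of degree at most $n-1$ forces $g\equiv0$. To do this, test against the extreme Fourier mode. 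Take $F=z^{-(n-1)}A$. Since $R$ has degree exactly $n-2$ with top coefficient $V_1$ and constant term $R(0)=-V_n/\lambda\neq0$, the coefficient of the top mode picks out
\[
\bar\beta\,V_1^*AR(0)\quad\text{or}\quad\beta\,R(0)^*AV_1
\]
alone, depending on the orientation. Choosing $A$ with $V_1^*AR(0)\neq0$ gives $\beta=0$. Then $\alpha\,\phi_R(I)=0$ with $\int R^*R\neq0$ gives $\alpha=0$. Hence $|az+b|^2=t|z-\lambda|^2$ on $\T$. This would make the zero set of $az+b$, namely either $\{\lambda\}$ or $\{1/\bar\lambda\}$, match, and that fails for $|\lambda|\neq1$ unless $\lambda=1/\bar\lambda$. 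This is a contradiction. If the degenerate case $a=0$ (that is, $\lambda=1$) ever arose it would lie on $\T$ anyway; for safety, one can use $e^{i\theta}\tilde V$ with a generic $\theta$ in place of $\tilde V$.

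Therefore every root lies on $\T$, and $V$ keeps $V_1,V_n\neq0$, as required.
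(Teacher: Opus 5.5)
Your proof is correct, but it produces the dominated map differently from the paper.

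\textbf{The paper's route.} It keeps the factorization $Q_V=(z-\alpha)P$ and subtracts $\psi(T)=\epsilon\int_\T P^*F_TP\,dz$ with $0<\epsilon<\min_\T|z-\alpha|^2$. It then invokes Fej\'er--Riesz to write $|z-\alpha|^2-\epsilon=|r(z)|^2$ with $r$ linear, so $\phi-\psi$ is again a compression by a polynomial of degree at most $n-1$. Non-proportionality is then nearly free. Because $\deg P\le n-2$, the $z^{\pm(n-1)}$ Fourier mode of $\psi$ vanishes, while that of $\phi$ is $V_1^*AV_n\neq0$. This forces $t=0$ and then $P=0$.

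\textbf{Your route.} You reflect the off-circle root, $(z-\lambda)\mapsto(1-\bar\lambda z)$, to get a second isometry $\tilde V$ implementing the same $\phi$. You then reuse the splitting
$\phi=\frac{1}{4}(V+\tilde V)^*(\cdot)(V+\tilde V)+\frac{1}{4}(V-\tilde V)^*(\cdot)(V-\tilde V)$
from Lemma~\ref{V1_neq_0}.

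\textbf{What each buys.} Your route avoids Fej\'er--Riesz and the choice of $\epsilon$. It also exhibits directly the Blaschke-factor non-uniqueness of the implementing isometry that Lemma~\ref{lem:monomial} later has to rule out. The price is a more computational non-proportionality check, since your dominated map now has full degree.

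\textbf{Your check goes through.}
\begin{itemize}
\item The extreme-mode test, using $V_1\neq0$ and $R(0)=-V_n/\lambda\neq0$, kills $\beta$.
\item Taking $F=I$ then kills $\alpha$.
\item This leaves $|az+b|^2=t|z-\lambda|^2$ on $\T$, with your $t$ absorbing the factor $4$. This is impossible because $a\lambda+b=1-|\lambda|^2$ and $a/\bar\lambda+b=(1-|\lambda|^2)/\bar\lambda$ are both nonzero. Equivalently, $|a|=|b|$ makes the root $-b/a$ of $az+b$ unimodular, so it cannot equal $\lambda$ or $1/\bar\lambda$.
\item The case $t=0$ would force $a=b=0$, i.e.\ $\lambda=1\in\T$.
\end{itemize}

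\textbf{Write-up fixes.} State this computation explicitly in place of ``fails unless $\lambda=1/\bar\lambda$''. Also delete the stray sentence about a one-parameter family in $s,t$, and the incorrect intermediate formula $(1-\bar\lambda)(z+1)R(z)\cdot c$.
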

	
	\begin{proof}
		Using Corollary ~\ref{cor:highest} there exists an isometry $V=(V_1,V_2,...,V_n)^t$ with $V_1 \neq 0, V_n \ne 0$ and $\phi(T) = V^*TV$. Clearly, $\alpha =0$ is not a root of $Q_V$ as $V_n\ne 0$. Let $\alpha$ be a non-zero root in $\C \setminus \T$, then the function $|z-\alpha|^2$ is strictly positive and non-constant on $\mathbb{T}$. Let $\delta = \min_{z \in \mathbb{T}} |z-\alpha|^2 > 0$. Choose a constant $\epsilon$ such that $0 < \epsilon < \delta$, ensuring $|z-\alpha|^2 - \epsilon > 0$ for all $z \in \mathbb{T}$.  We recall the formula ~\ref{phiFandintegral} for $\varphi$
		\[		\varphi(T) = \int_{\mathbb{T}} Q_V(z)^* F(z) Q_V(z)\; dz.
		\]
		
		Since $\alpha$ is a non-zero root we get $Q_V(z)= (z-\alpha)P(z)$ for some matrix-valued polynomial $P$ of degree atmost $n-2$, we rewrite $\varphi(T)$ as
		\[
		\varphi(T) = \int_{\mathbb{T}} |z-\alpha|^2 P(z)^* F(z) P(z) \; dz.
		\]
		Define a new map $\psi : \mathcal{T}_{n,m} \to M_m(\C)$ by
		\[
		\psi(T) = \epsilon \int_{\mathbb{T}} P(z)^* F(z) P(z)\; dz.
		\]
		Since $P(z)$ has degree at most $n-2$, its coefficients can be embedded into a column matrix $Y \in M_{mn, m}(\mathbb{C})$, making $\psi(T) = \epsilon Y^* T Y$. Thus, $\psi$ is a \cp map.
		
		Furthermore, $\phi - \psi$ is \cp because
		\begin{align*}
			(\varphi - \psi)(T) = \int_{\mathbb{T}} \left( |z-\alpha|^2 - \epsilon \right) P(z)^*  F(z) P(z) \;dz\\
			= \int_{\mathbb{T}} (r(z)P(z))^* F(z) (r(z)P(z))\; dz
		\end{align*}
		where $r(z)$ is a scalar valued polynomial such that $|r(z)|^2 =|z-\alpha|^2 - \epsilon $. Since $r(z)P(z)$ is a polynomial of degree at most $n-1$, $\varphi - \psi$ can also be written as $Z^*TZ$ and is a \cp map, meaning $\psi \le \varphi$. But clearly $\psi \neq t\varphi$ for any $t\in [0,1]$. This gives us a contradiction as $\varphi$ is a pure \ucp map.
	\end{proof}

	The crucial lemma towards uniqueness of extension is the following one. 
	\begin{lem} \label{lem:monomial}
		If there are two isometries $V = (V_1, V_2, \ldots, V_n)^t$ and $W = (W_1, W_2, \ldots, W_n)^t$ with $V_1, V_n \ne 0$ for one pure \ucp map $\varphi:\mathcal{T}_{n,m} \to M_m(\mathbb{C})$, i.e., if $\varphi(T)=V^*TV =W^*TW$ for all $T \in \mathcal{T}_{n,m}$, then $V=\lambda W$ for some $\lambda \in \mathbb{T}$.
	\end{lem}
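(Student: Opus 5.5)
The plan is to use purity of $\varphi$ to force every map $T\mapsto X^*TX$, with $X$ in the linear span of $V$ and $W$, to be a scalar multiple of $\varphi$. Polarization then turns this into a linear relation $V^*TW=s\,V^*TV$. The extreme block diagonals, together with Lemma \ref{Vn_neq_0} and Lemma \ref{V1_neq_0}, will pin down $W=sV$.

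\emph{Step 1 (domination).} For scalars $a,b$, put $X=aV+bW$. A direct expansion gives the identity
\[
(|a|^2+|b|^2)\bigl(V^*TV+W^*TW\bigr)-X^*TX=(\bar b V-\bar a W)^*T(\bar bV-\bar aW).
\]
Since $V^*TV=W^*TW=\varphi(T)$, the left side equals $2(|a|^2+|b|^2)\varphi(T)-X^*TX$. The right side is a \cp map of $T$, so
\[
\psi_X(T):=\frac{X^*TX}{2(|a|^2+|b|^2)}\preceq\varphi .
\]
Purity of $\varphi$ then gives $X^*TX=q(X)\,\varphi(T)$ for some scalar $q(X)\ge 0$, for every $X\in\operatorname{span}\{V,W\}$. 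The sesquilinear map $(X,Y)\mapsto X^*TY$ on this span is therefore $\varphi(T)$ times a scalar sesquilinear form. By polarization there are scalars with
\[
V^*TW=s\,\varphi(T)\quad\text{and}\quad W^*TV=\bar s\,\varphi(T)\quad\text{for all } T\in\cT_{n,m}.
\]

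\emph{Step 2 (killing the difference).} Set $X=W-sV$. Expanding $X^*TV$ and using Step 2's relation gives
\[
X^*TV=W^*TV-\bar s\,V^*TV=0 .
\]
Step 1 applied to $X$ gives $X^*TX=q\,\varphi(T)$ with $q\ge0$, and taking $T=I$ shows $X^*X=qI$.

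Next evaluate $X^*TV=0$ on $T^{(\ell,A)}$ with $\ell=n-1$, the matrix with $A$ in the corner diagonal and zeros elsewhere. This gives $X_n^*AV_1=0$ for all $A$, and since $V_1\neq0$ we get $X_n=0$. Evaluating the adjoint relation $V^*TX=0$ on the same $T^{(n-1,A)}$ gives $V_n^*AX_1=0$ for all $A$, and since $V_n\ne0$ we get $X_1=0$.

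\emph{Step 3 (ruling out $q>0$).} Suppose $q>0$. Then $X/\sqrt q$ is an isometry implementing the pure map $\varphi$, and its first block vanishes. If its last block also vanishes, apply the shift of Lemma \ref{Vn_neq_0}, which moves the nonzero blocks to the bottom and inserts zeros at the top. This produces an isometry implementing $\varphi$ with nonzero last block and still zero first block, contradicting Lemma \ref{V1_neq_0}. Hence $q=0$, so $X^*X=0$ and $X=0$, i.e. $W=sV$. Finally $I=W^*W=|s|^2I$ forces $|s|=1$, and we take $\lambda=\bar s$ so that $V=\lambda W$.

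The step I expect to need most care is Step 1: it requires the explicit Cauchy–Schwarz-type identity so that purity applies to every combination $aV+bW$, and not only to $\tfrac12(V+W)$. It also requires checking that the polarization recovers $V^*TW$ as a scalar multiple of $\varphi(T)$ with the stated conjugations.
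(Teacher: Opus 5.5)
Your argument is correct, and it takes a genuinely different route from the paper.

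\textbf{The paper's route.} The paper passes to the polynomials. From $V^*TV=W^*TW$ on $\cT_{n,m}$ it derives $Q_V(z)^*AQ_V(z)=Q_W(z)^*AQ_W(z)$ on $\T$ for every $A$. From the entrywise identities it extracts a rational function $R$, unimodular on $\T$, with $Q_V=RQ_W$, and writes $R$ as $e^{i\theta}z^m$ times Blaschke-type factors. The Blaschke factors are then removed by appeal to the argument of Lemma \ref{lem:roots}, and the monomial by a degree count together with $V_n\neq0$.

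\textbf{Your route.} You never leave the block-matrix picture. The identity $(|a|^2+|b|^2)(V^*TV+W^*TW)-X^*TX=(\bar bV-\bar aW)^*T(\bar bV-\bar aW)$ lets purity act on every $X\in\operatorname{span}\{V,W\}$. Polarization then gives $W^*TV=\bar s\,\varphi(T)$. The conjugation you flagged does work out: at $T=I$ the scalar form is $\tfrac1m\operatorname{tr}(X^*Y)$, which is Hermitian. The corner diagonal $T^{(n-1,A)}$ kills the first and last blocks of $W-sV$, and Lemmas \ref{Vn_neq_0} and \ref{V1_neq_0} exclude a nonzero remainder.

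\textbf{What each approach buys.} Yours is short and fully rigorous. It avoids the step the paper only sketches, namely eliminating the $\alpha_j,\beta_s$ and fixing $m$, and it does not need Lemma \ref{lem:roots} at all. The paper's route instead builds the framework ``$Q_W=f\,Q_V$ with $f$ rational and its denominator controlled by the roots of $Q_V$''. That framework is reused in Theorem \ref{thm:unique_CP}, where purity is not available and only the location of the roots of $Q_V$ can be exploited. Your argument depends essentially on purity, so it would not give that direction.

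\textbf{Cosmetic points.} In Step 2 you mean Step 1's relation. In Step 3, Step 2 already gives $X_n=0$, so the conditional is unnecessary: the shift is always applied and automatically produces a zero first block.
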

	
	\begin{proof}
		Let $T = \left(\left( T_{j-k} \right) \right) \in \mathcal{T}_{n,m}$ be a block Toeplitz matrix of size $mn \times mn$.  So,
		\[\varphi(T) = 
		V^* T V = \sum_{j=1}^n \sum_{k=1}^n V_j^* T_{j-k} V_k
		= \sum_{\ell=-(n-1)}^{n-1} \left( \sum_{j-k=\ell} V_j^* T_\ell V_k \right).
		\]
		A similar expression holds for 	$\varphi$ in terms of $W$ also. Fix an arbitrary index $\ell \in \{-(n-1), \ldots, n-1\}$ and any $m \times m$ matrix $A$. Construct a specific block Toeplitz matrix $T^{(\ell, A)} \in \mathcal{T}_{n,m}$ defined by its diagonals
		\[
		T_r = \begin{cases} 
			A & \text{if } r = \ell \\ 
			0 & \text{if } r \neq \ell. 
		\end{cases}
		\]
		Substituting this choice of $T^{(\ell, A)}$ into our identity $V^* T^{(\ell, A)} V = W^* T^{(\ell, A)} W$ we get 
		\begin{equation} \label{VandW}
			\sum_{j-k=\ell} V_j^* A V_k = \sum_{j-k=\ell} W_j^* A W_k.
		\end{equation}
		This identity holds for every $\ell \in \{-(n-1), \ldots, n-1\}$ and for every choice of an $m \times m$ matrix $A$. Now consider the matrix-valued polynomial $Q_V(z) = \sum_{k=1}^n z^{n-k} V_k$. For any point $z$ on the unit circle $\mathbb{T}$, we have
		%\[
		%Q_V(z)^* = \left( \sum_{j=1}^n z^{n-j} V_j \right)^* = \sum_{j=1}^n \bar{z}^{n-j} V_j^* = \sum_{j=1}^n z^{j-n} V_j^*
		%\]
		%Expanding the product $Q_V(z)^* A Q_V(z)$ for a constant matrix $A$, we get 
		%\begin{align*}
		%	Q_V(z)^ * A Q_V(z) &= \left( \sum_{j=1}^n z^{j-n} V_j^* \right) A \left( \sum_{k=1}^n z^{n-k} V_k \right) \\
		%	&= \sum_{j=1}^n \sum_{k=1}^n z^{j-n} z^{n-k} V_j^* A V_k \\
		%	&= \sum_{j=1}^n \sum_{k=1}^n z^{j-k} V_j^* A V_k
		%\end{align*}
	
	\[	
			Q_V(z)^* A Q_V(z) = \sum_{j=1}^n \sum_{k=1}^n z^{j-k} V_j^* A V_k\\
			=  \sum_{\ell=-(n-1)}^{n-1} z^\ell \left( \sum_{j-k=\ell} V_j^* A V_k \right).
	\]
		%Just as we did previously, we can gather these terms by the powers of $z$ using the substitution $\ell = j - k$ to get 
		%\[
		%Q_V(z)^* A Q_V(z) = \sum_{\ell=-(n-1)}^{n-1} z^\ell \left( \sum_{j-k=\ell} V_j^* A V_k \right).
		%\]
		Applying identical steps to the polynomial $Q_W(z)$ results in
		\[
		Q_W(z)^* A Q_W(z) = \sum_{\ell=-(n-1)}^{n-1} z^\ell \left( \sum_{j-k=\ell} W_j^* A W_k \right).
		\]
		By ~\eqref{VandW}, we get $Q_V(z)^* A Q_V(z) = Q_W(z)^* A Q_W(z) $ for all $ A \in M_m(\C)$ and all $z \in \T$. 			
		Clearly, for \(z_0\in\mathbb{T}\), $ Q_V(z_0)=0$ if and only if $Q_W(z_0)=0.$
		Letting $A=E_{k\ell}$ and comparing the $ab$-th entry we get 
	%Let \(A=E_{k\ell}\). Then the \(ab\)-th entry is the same
		%\[
		%\bigl(Q_V(z)^*E_{k\ell}Q_V(z)\bigr)_{ab}
		%=
		%\bigl(Q_W(z)^*E_{k\ell}Q_W(z)\bigr)_{ab}.
		%\]
		%Hence,
		\begin{equation}\label{kalb}
			\overline{Q_V(z)_{ka}}\,Q_V(z)_{\ell b}
			=
			\overline{Q_W(z)_{ka}}\,Q_W(z)_{\ell b},
			\qquad \forall z\in\mathbb{T},
		\end{equation}
		for all $\ell,k,a,b\in\{1,\dots,m\}$.\\
		Since $Q_W(z)$ is a non-zero matrix-valued polynomial, there exist $i,j$ such that $ Q_W(z)_{ij}\neq 0$, except at finitely many points on $\mathbb{T}$, since $Q_W(z)_{ij}$ is a polynomial in $z$.
		
		Putting $k=i,\ a=j,\ \ell=i,\ b=j$ in ~\eqref{kalb}, we get
		\[
		\overline{Q_V(z)_{ij}}\,Q_V(z)_{ij}
		=
		\overline{Q_W(z)_{ij}}\,Q_W(z)_{ij},
		\qquad \forall z\in\mathbb{T}.
		\]
		Therefore, $|Q_V(z)_{ij}|=|Q_W(z)_{ij}|	\neq 0$ except at finitely many points on $\mathbb{T}$. Hence,
		\begin{equation}\label{rational}
			\left|
			\frac{Q_V(z)_{ij}}{Q_W(z)_{ij}}
			\right|
			=1.
		\end{equation}
		Define the rational function $ 	R:\mathbb{C}\to\widehat{\mathbb{C}} =\mathbb{C}\cup\{\infty\}$ 
		by
		\begin{equation} \label{R}
			R(z)=\frac{Q_V(z)_{ij}}{Q_W(z)_{ij}}.
		\end{equation}	
		From ~\eqref{kalb}, we have $\overline{Q_V(z)_{ij}}\,Q_V(z)_{\ell b} = \overline{Q_W(z)_{ij}}\,Q_W(z)_{\ell b}$ for all $z\in\mathbb{T}.$\\
		%Hence,
		%\[
		%|Q_V(z)_{ij}|^2\,Q_V(z)_{\ell b}
		%=
		%Q_V(z)_{ij}\,
		%\overline{Q_W(z)_{ij}}\,
		%Q_W(z)_{\ell b}.
		%\]
		%Therefore,
		%\[
		%|Q_W(z)_{ij}|^2\,Q_V(z)_{\ell b}
		%=
		%Q_V(z)_{ij}\,
		%\overline{Q_W(z)_{ij}}\,
		%Q_W(z)_{\ell b}.
		%\]
		Thus, we get that $Q_W(z)_{ij}\,Q_V(z)_{\ell b} =Q_V(z)_{ij}\,Q_W(z)_{\ell b}$ for all except at finitely many points in \(\mathbb{T}\). Hence,
		\[
		Q_V(z)_{\ell b}
		=
		\frac{Q_V(z)_{ij}}{Q_W(z)_{ij}}\,
		Q_W(z)_{\ell b}
		\]
		except at finitely many points. Let $F:\mathbb{C}\to\mathbb{C}$ be a function defined by
		\[
		F(z)
		=
		Q_W(z)_{ij}\,Q_V(z)_{\ell b}
		-
		Q_V(z)_{ij}\,Q_W(z)_{\ell b}.
		\]
		Then
		\begin{enumerate}
			\item[(i)] $F$ is a polynomial function on $\mathbb{C}$, since
			$Q_V(z)_{ij}$ and $Q_W(z)_{\ell b}$ are polynomials defined on $\mathbb C$.
			
			\item[(ii)] $F(z)=0$ on $\mathbb{T}$ except at finitely many points.
		\end{enumerate}
		A polynomial having infinitely many roots is identically zero. Therefore, $F = 0$.
		%\[
		%F(z)=0
		%\qquad \forall z\in\mathbb{C}.
		%\]		
		This implies that $Q_W(z)_{ij}\,Q_V(z)_{\ell b} = Q_V(z)_{ij}\,Q_W(z)_{\ell b}$ for all $z\in\mathbb{C}.$
		Therefore,
		\[
		Q_V(z)_{\ell b}
		=
		\frac{Q_V(z)_{ij}}{Q_W(z)_{ij}}\,
		Q_W(z)_{\ell b},
		\qquad \forall z\in\mathbb{C},
		\]
		except at those points where $ Q_W(z)_{ij}=0.$ Hence, $ Q_V(z)=R(z)\,Q_W(z)$ where $R$ is as defined in ~\eqref{R}. Also, by ~\eqref{rational}, $|R(z)|=1$
		on \(\mathbb{T}\), except at finitely many points. Since $R$ is a rational function, by using limiting arguments, we get $|R(z)|=1$ on $\mathbb{T}$.

		\smallskip
		Hence,  $\exists \; \alpha_j \in \mathbb D$ for $j \in \{1,\ldots,N\}$, $\beta_s \in \mathbb{D}$ for $s \in \{1,\ldots,M \}$ and $m \in \Z$ such that
		\[
		R(z)=e^{i\theta}z^m \left( \prod\limits_{j=1}^N 
		\frac{z-\alpha_j}{1-\bar{\alpha_j}z}\right)\left( \prod\limits_{s=1}^M \frac{1-\bar{\beta_s}z}{z-\beta_s}\right).
		\]
		And the following equation holds on $\C$ 
		
		\[
		Q_V(z)= e^{i\theta}z^m \left( \prod\limits_{j=1}^N 
		\frac{z-\alpha_j}{1-\bar{\alpha_j}z}\right)\left( \prod\limits_{s=1}^M \frac{1-\bar{\beta_s}z}{z-\beta_s}\right) Q_W(z).
		\]
		Since $V_1 \neq 0$, we have deg$(Q_V) =n-1$ and deg$(Q_W)\le n-1$.  This will imply that $m \ge 0$.
		Since $\phi$ is a pure \ucp map so following the same arguments as done in Lemma ~\ref{lem:roots} it can be shown that such $\alpha_j$ and $\beta_s$ will not exist in our case.

		So we have $Q_V(z) = e^{i\theta}z^mQ_W(z)$. Moreover, using $V_n \ne 0$, we get that $m=0$. Hence, $Q_V(z) = e^{i\theta}Q_W(z)$ and this proves the assertion that $V=\lambda W$.
		\end{proof}

       \begin{thm}
		A pure \ucp map $\phi : \cT_{n,m} \to M_m(\C)$ has a unique \cp extension to $M_{mn}(\C)$.
	\end{thm}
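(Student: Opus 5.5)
Let $\phi$ be pure, and let $\Psi$ be any \cp extension of $\phi$ to $M_{mn}(\C)$. The plan is to show that $\Psi$ must equal $A \mapsto V^*AV$, where $V$ is the isometry of Corollary~\ref{cor:highest} with $V_1, V_n \neq 0$.

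Since $\Psi$ is unital, we have $\Psi(I)=\phi(I)=I_m$, so $\Psi$ is a \ucp map on $M_{mn}(\C)$. By Choi's theorem,
\[
\Psi(A)=\sum_{k=1}^N W^{(k)*} A W^{(k)}
\]
for some $W^{(k)}\in M_{mn,m}(\C)$ with $\sum_k W^{(k)*}W^{(k)}=I$. Put $\psi_k(T)=W^{(k)*}TW^{(k)}$ for $T\in\cT_{n,m}$. Each $\psi_k$ is \cp, and $\phi-\psi_k=\sum_{j\neq k}\psi_j$ is \cp, so $\psi_k\preceq\phi$. Purity of $\phi$ gives $\psi_k=t_k\phi$ with $t_k\in[0,1]$. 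Evaluating at $I$ gives $W^{(k)*}W^{(k)}=t_kI$.

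For each $k$ with $t_k>0$, the matrix $\tilde W^{(k)}=t_k^{-1/2}W^{(k)}$ is an isometry implementing $\phi$ on $\cT_{n,m}$. I want to apply Lemma~\ref{lem:monomial} to conclude $\tilde W^{(k)}=\lambda_kV$ with $\lambda_k\in\T$. That would give $\Psi(A)=\sum_k t_k V^*AV=V^*AV$ on all of $M_{mn}(\C)$, which is the claimed uniqueness. It also shows the unique extension is the pure map $A\mapsto V^*AV$, which is part (ii) of Theorem~\ref{TheMainTheorem}.

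The main obstacle is a hypothesis mismatch. Lemma~\ref{lem:monomial} assumes $V_1,V_n\neq 0$, but $\tilde W^{(k)}$ need not satisfy this a priori. Reading the proof of Lemma~\ref{lem:monomial}, only the nonvanishing for $V$ is actually used:
\begin{itemize}
\item $\deg Q_V=n-1\ge\deg Q_W$ forces the exponent $m\ge 0$ in the Blaschke-type factorization $Q_V=RQ_W$;
\item $V_n\neq0$ rules out a factor $z^m$ with $m>0$;
\item the absence of non-unimodular zeros and poles comes from purity, as in Lemma~\ref{lem:roots}.
\end{itemize}
So I will either note that the lemma holds with the hypothesis imposed only on $V$, or handle $\tilde W^{(k)}$ directly, as follows.

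If $W_n=0$, the shift of Lemma~\ref{Vn_neq_0} produces $W'$ with $W'_n\neq0$ and the same restriction to $\cT_{n,m}$. Applying Lemma~\ref{V1_neq_0} to $W'$ gives $W'_1\neq0$. Since $W'$ is $W$ shifted down by at least one block, $W'_1\neq 0$ forces the shift to be zero, contradicting $W_n=0$. Hence $W_n\neq0$. Applying Lemma~\ref{V1_neq_0} again, now to $W$ itself, gives $W_1\neq0$. So every $\tilde W^{(k)}$ satisfies the hypotheses of Lemma~\ref{lem:monomial}, and the argument closes.
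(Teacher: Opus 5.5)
Your proof is correct and follows the same route as the paper. Both take a Choi--Kraus decomposition of an arbitrary extension, use purity to force each restricted Kraus term to equal $t_k\phi$, and apply Lemma~\ref{lem:monomial} to get $W^{(k)}=\lambda_k t_k^{1/2}V$, so the extension is $A\mapsto V^*AV$. You also check that each normalized $\tilde W^{(k)}$ has nonzero first and last blocks (via Lemmas~\ref{Vn_neq_0} and~\ref{V1_neq_0}), or alternatively note that Lemma~\ref{lem:monomial} only needs this for $V$. That check is valid and verifies a hypothesis the paper's proof uses without comment.
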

	
	\begin{proof}
		Let $\phi : \cT_{n,m} \to M_m(\C)$ be a pure \ucp map. Get a $V$ as in Corollary ~\ref{cor:highest}. Clearly, $\tilde{\phi}(T) =V^*TV$ is a \cp extension of $\phi$ from $\cT_{n,m}$ to $M_{mn}(\C). $ Let $\phi_1$ be another \cp extension of map $\phi$. By Choi's theorem in ~\cite{Choi}, there exist $mn \times m$ matrices $W_1, \ldots , W_k$ such that $\phi_1(T)= \sum\limits_{i=1}^k W_i^*TW_i$.
		
		This implies that for all $i$, $W_i^*TW_i \leq V^*TV$ and $V^*TV $ is a pure map so there exists $t_i \in (0,1]$ such that $t_i V^*TV = W_i^*TW_i $. Also, $\sum t_i = 1$ because $\sum\limits_{i=1}^k W_i^*TW_i = V^*TV$ for $t \in \mathcal T_{n,m}$. Now using Lemma ~\ref{lem:monomial}, we get $W_i = \lambda_i t_i^{1/2} V$ where $\lambda_i \in \mathbb T$. Putting these values back,
		\[
		\phi_1(T) = \sum_{i=1}^k t_i V^*TV = V^*TV = \phi(T), \qquad T \in M_{mn}(\C).
		\]
		Hence, a pure map $\phi$ has a unique \cp extension property.
	\end{proof}

	Next we quote a folklore result from Arveson's boundary theory, and it follows from Arveson's extension theorem.

	\begin{thm} \label{thm:converse}
		Let $\mathcal{A}$ be a unital $C^*$-algebra, and let $\mathcal{S} \subseteq \mathcal{A}$ be an operator subsystem. Let $\mathcal{H}$ be a Hilbert space and $\mathcal{B}(\mathcal{H})$ be the algebra of bounded linear operators on $\mathcal{H}$.
		
		Let $\Phi: \mathcal{A} \to \mathcal{B}(\mathcal{H})$ be a pure completely positive (\it{cp}) map, and let $\phi = \Phi|_\mathcal{S}$ be its restriction to the operator subsystem $\mathcal{S}$. If $\Phi$ is the unique \cp extension of $\phi$ to $\mathcal{A}$, then $\phi$ is a pure \cp map on $\mathcal{S}$.
	\end{thm}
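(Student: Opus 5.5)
Suppose $\psi$ is a \cp map on $\mathcal{S}$ with $\phi \succeq \psi$, that is, $\phi - \psi$ is \cp on $\mathcal{S}$. The plan is to lift this whole situation to $\mathcal{A}$ and then use the purity of $\Phi$ there.

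First, apply Arveson's extension theorem to both pieces. Extend $\psi$ to a \cp map $\Psi: \mathcal{A} \to \mathcal{B}(\mathcal{H})$, and extend $\phi - \psi$ to a \cp map $\Theta: \mathcal{A} \to \mathcal{B}(\mathcal{H})$. Arveson's theorem applies to \cp maps defined on operator systems and taking values in $\mathcal{B}(\mathcal{H})$; if unitality is a worry, note that any \cp map on an operator system is bounded with $\|\cdot\|_{cb}$ equal to the norm of its value at $1$, so extensions exist in this generality. Then $\Psi + \Theta$ is a \cp map on $\mathcal{A}$ whose restriction to $\mathcal{S}$ is $\psi + (\phi - \psi) = \phi$. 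By the uniqueness hypothesis, $\Psi + \Theta = \Phi$. In particular $\Phi - \Psi = \Theta$ is \cp, so $\Phi \succeq \Psi$.

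Next, purity of $\Phi$ gives $t \in [0,1]$ with $\Psi = t\Phi$. Restricting to $\mathcal{S}$ yields $\psi = t\phi$. This is exactly the definition of purity of $\phi$, so the proof is complete.

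The only delicate point is the uniqueness hypothesis. It must be read as uniqueness among all \cp extensions of $\phi$, as in the preceding theorem, and not only among \ucp ones. If it were only stated for \ucp extensions, I would first check that $\Psi + \Theta$ is unital whenever $\phi$ is: this holds because its value at $1 \in \mathcal{S}$ is $\phi(1)$. So the argument goes through in either reading, and no further obstacle is expected.
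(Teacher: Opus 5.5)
Your proposal is correct and takes the same approach as the paper. The paper gives no detailed proof, only noting that this folklore fact follows from Arveson's extension theorem. Your argument (extend $\psi$ and $\phi-\psi$ separately, apply uniqueness to the sum $\Psi+\Theta$, then use purity of $\Phi$) is the standard way to make that remark rigorous.
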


Thus, the proof of (ii) implying (i) in Theorem ~\ref{TheMainTheorem} is also complete.

	\section{The roots of the polynomial}
	
Now that (i) and (ii) in Theorem ~\ref{TheMainTheorem} are equivalent and (i) implying (iii) has already been proved, we take up the proof of (iii) implying (ii) below.

	\begin{thm} \label{thm:unique_CP}
		Let $V=(V_1,V_2,\ldots, V_n)^t$ be an isometry such that $V_1, V_n \neq 0$ and $Q_V(z) = \sum_{k=1}^n z^{n-k} V_k$ has all its roots (i.e., all $a\in \C$ such that $Q_V(a)$ is a zero matrix) on $\T$. Then $\phi : \cT_{n,m} \to M_m(\C)$ defined by $\phi(T) = V^*TV$ has a unique \ucp extension to $M_{mn}(\mathbb{C})$; that is, the only \cp map $\Phi: M_{mn}(\mathbb{C}) \to M_m(\mathbb{C})$ satisfying $\Phi(T) = \phi(T)$ for all $T \in \cT_{n,m}$ is the canonical map $\Phi(X) = V^* X V$.
	\end{thm}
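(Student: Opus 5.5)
By Choi's theorem \cite{Choi}, any \cp $\Phi \colon M_{mn}(\C) \to M_m(\C)$ has the form $\Phi(X) = \sum_{i=1}^k W_i^* X W_i$ with $W_i \in M_{mn,m}(\C)$. Write $W_i = (W_{i,1}, \ldots, W_{i,n})^t$ and $Q_{W_i}(z) = \sum_{j} z^{n-j} W_{i,j}$, each of degree at most $n-1$. The strategy is to show that $Q_{W_i} = c_i Q_V$ for constants $c_i$ with $\sum_i |c_i|^2 = 1$. Since $Q_V$ and $Q_{W_i}$ determine $V$ and $W_i$, this gives $W_i = c_i V$ and hence $\Phi(X) = V^*XV$.

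\textbf{Step 1 (pointwise identity).} Suppose $\Phi(T) = \phi(T)$ for all $T \in \cT_{n,m}$. Evaluate on the matrices $T^{(\ell,A)}$, as in the proof of Lemma~\ref{lem:monomial}. Comparing the coefficient of $z^\ell$ for each $\ell$, we obtain the trigonometric polynomial identity
\[
Q_V(z)^* A Q_V(z) = \sum_{i=1}^k Q_{W_i}(z)^* A Q_{W_i}(z), \qquad z \in \T,\ A \in M_m(\C).
\]

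\textbf{Step 2 (Kraus step at each point).} Fix $z \in \T$ with $Q_V(z) \neq 0$. The identity in Step 1 says that the \cp map $A \mapsto Q_V(z)^* A Q_V(z)$, which has Kraus rank one, also has Kraus operators $Q_{W_1}(z), \ldots, Q_{W_k}(z)$. By the unitary freedom in Kraus representations (equivalently, minimality of the rank-one Stinespring/Choi decomposition), each $Q_{W_i}(z)$ is a scalar multiple $c_i(z) Q_V(z)$, and $\sum_i |c_i(z)|^2 = 1$. In particular $|c_i(z)| \le 1$ on $\T$ away from the finitely many zeros of $Q_V$.

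Choose an entry $(p,q)$ with $(Q_V)_{pq} \not\equiv 0$. Then $c_i(z) = (Q_{W_i})_{pq}/(Q_V)_{pq}$ is a rational function. Repeating the polynomial argument of Lemma~\ref{lem:monomial}, we get $Q_{W_i} = c_i Q_V$ identically on $\C$.

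\textbf{Step 3 (showing $c_i$ is constant).} I expect this to be the main obstacle, and it is where the root hypothesis enters. Let $g$ be the monic gcd of the entries of $Q_V$, and write $Q_V = g \tilde Q$, where the entries of $\tilde Q$ are coprime. The roots of $g$ are exactly the common zeros of the entries, i.e.\ the roots of $Q_V$ in the sense of the theorem, so by hypothesis they all lie on $\T$.

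Since $c_i g \tilde Q = Q_{W_i}$ is a polynomial and the entries of $\tilde Q$ are coprime, a B\'ezout combination of those entries shows that $h_i = c_i g$ is a polynomial. Hence $c_i = h_i/g$, and after cancellation any remaining pole of $c_i$ is a root of $g$, so it lies on $\T$. But $|c_i| \le 1$ on $\T$ minus a finite set, so $c_i$ is bounded near each such point. A rational function cannot be bounded near a pole, so $c_i$ is a polynomial.

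Finally compare degrees. We have $\deg Q_{W_i} \le n-1$, and $\deg Q_V = n-1$ because $V_1 \neq 0$. Therefore $\deg c_i = 0$, i.e.\ each $c_i$ is constant.

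\textbf{Conclusion.} Comparing coefficients in $Q_{W_i} = c_i Q_V$ gives $W_i = c_i V$. From Step 2, $\sum_i |c_i|^2 = 1$. Hence
\[
\Phi(X) = \sum_i |c_i|^2\, V^* X V = V^* X V \qquad \text{for all } X \in M_{mn}(\C).
\]
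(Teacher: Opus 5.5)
Your proof is correct and follows essentially the same route as the paper: Choi decomposition, matching on single-diagonal block Toeplitz matrices to get the pointwise Kraus identity on $\T$, scalar rational multipliers $c_i$ with $\sum_i |c_i|^2=1$ on $\T$, exclusion of poles using the root hypothesis, and a degree count. The only difference is cosmetic: the paper puts all $c_r$ over a common denominator $B$ coprime to the numerators $A_r$ and evaluates $\sum_r |A_r|^2=|B|^2$ at a root of $B$ (such a root must be a root of $Q_V$, hence lies on $\T$). You instead locate the poles among the roots of $Q_V$ via $Q_V=g\tilde Q$ and B\'ezout, and then use boundedness of $c_i$ on $\T$.
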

	
	Before proving the theorem, we establish a useful lemma regarding identical completely positive maps with low-rank Kraus representations. The lemma must be known and can be proved in different ways. 
	
	\begin{lem}\label{lem:kraus}
		Let $X, X_1, \dots, X_N \in M_m(\mathbb{C})$ be matrices such that for all $A \in M_m(\mathbb{C})$,
		\begin{equation}\label{eq:lemma_hyp}
			\sum_{r=1}^N X_r^* A X_r = X^* A X.
		\end{equation}
		Then there exist scalars $c_1, \dots, c_N \in \mathbb{C}$ such that $X_r = c_r X$ for each $r = 1, \dots, N$ and $\sum_{r=1}^N |c_r|^2 = 1$.
	\end{lem}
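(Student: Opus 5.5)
The plan is to recast the hypothesis \eqref{eq:lemma_hyp} as an identity between rank-one positive matrices in $\C^{m^2}$, in the spirit of Choi's theorem~\cite{Choi}. Then containment of ranges forces each $X_r$ to be a multiple of $X$, and a trace comparison fixes the normalisation.

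First I will specialise \eqref{eq:lemma_hyp} to the matrix units $A=E_{k\ell}$, exactly as was done in \eqref{kalb}. Comparing the $(a,b)$-th entries on both sides gives
\[
\sum_{r=1}^N \overline{(X_r)_{ka}}\,(X_r)_{\ell b} \;=\; \overline{X_{ka}}\,X_{\ell b}\qquad \text{for all } k,\ell,a,b\in\{1,\dots,m\}.
\]
Now let $\xi_r\in\C^{m^2}$ be the vector with entries $(\xi_r)_{(\ell,b)}=(X_r)_{\ell b}$, that is, the vectorisation of $X_r$, and let $\xi$ be the vectorisation of $X$. Reading the display as an entrywise equality of $m^2\times m^2$ matrices indexed by the pairs $(\ell,b)$ and $(k,a)$, it says precisely that
\[
\sum_{r=1}^N \xi_r\xi_r^* \;=\; \xi\xi^* .
\]

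Next I will pair both sides with an arbitrary vector $v\perp\xi$. This gives $\sum_r |\langle \xi_r, v\rangle|^2 = |\langle \xi,v\rangle|^2 = 0$. Hence every $\xi_r$ is orthogonal to $\xi^\perp$, so $\xi_r\in \operatorname{span}\{\xi\}$. Write $\xi_r=c_r\xi$. Undoing the vectorisation, this is $X_r=c_rX$.

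Finally I will take the trace of the rank-one identity. This yields $\big(\sum_r|c_r|^2\big)\|\xi\|^2=\|\xi\|^2$, where $\|\xi\|$ is the Frobenius norm of $X$. So $\sum_r|c_r|^2=1$ whenever $X\neq 0$.

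The only point that needs care is the degenerate case $X=0$. There the same argument, applied with $v$ arbitrary, gives $X_r=0$ for all $r$. The conclusion then holds with, say, $c_1=1$ and $c_r=0$ for $r\ge 2$. Apart from this, I expect no real obstacle. The one step to check carefully is the bookkeeping of indices when identifying the entrywise identity with $\sum_r\xi_r\xi_r^*=\xi\xi^*$: the conjugated factor must carry the pair $(k,a)$ and the unconjugated factor the pair $(\ell,b)$.
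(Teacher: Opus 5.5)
Your proof is correct, but it takes a more elementary route than the paper. The paper settles the lemma in one line by citing Choi's unitary-freedom theorem for Kraus decompositions. Padding $\{X\}$ to $\{X,0,\ldots,0\}$, it obtains an $N\times N$ unitary $(u_{rs})$ with $X_r=u_{r1}X$, so $c_r=u_{r1}$, and $\sum_r|c_r|^2=1$ just says that a column of a unitary is a unit vector. The paper also sketches an alternative via Arveson's Radon--Nikodym theorem, noting that an extra step is then needed to pass from $X_r^*AX_r=|c_r|^2X^*AX$ to $X_r=c_rX$.

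You instead prove exactly the special case of Choi's theorem that is needed. Testing on matrix units turns the hypothesis into the identity $\sum_r\xi_r\xi_r^*=\xi\xi^*$, which is the equality of the two Choi matrices up to transposition; your index bookkeeping (conjugated factor on $(k,a)$, unconjugated on $(\ell,b)$) is right. The orthogonality argument with $v\perp\xi$ then gives $\xi_r\in\operatorname{span}\{\xi\}$ directly, phase included, so the extra step of the Radon--Nikodym route never arises.

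The citation buys the paper brevity and a uniform treatment of $X=0$, where the unitary column still yields $\sum_r|c_r|^2=1$ automatically. Your argument buys self-containedness: its only input is that a sum of rank-one positive matrices equal to a rank-one positive matrix forces each summand to be a multiple of it. Your separate handling of $X=0$ is fine. It is also the relevant case in the proof of Theorem~\ref{thm:unique_CP}, where $X=Q_V(z)$ vanishes at the roots on $\mathbb{T}$.
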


	\begin{proof}

The hypothesis says the two \cp maps
\[
\phi(A) = X^* A X, \qquad \psi(A) = \sum_{r=1}^N X_r^* A X_r
\]
on $M_m(\mathbb{C})$ are equal. Choi's uniqueness theorem (see ~\cite{Choi}) says any two Kraus decompositions of the same \cp map are related by an isometry acting on the ``Kraus index'': padding $\{X\}$ to $\{X, 0, \ldots, 0\}$ of length $N$, there exists an $N \times N$ unitary $(u_{rs})$ with
\[
X_r = u_{r1} X + \sum_{s \ge 2} u_{rs} \cdot 0 = u_{r1} X.
\]
So $c_r := u_{r1}$ works, and $\sum_r |c_r|^2 = 1$ because $(u_{r1})_r$ is a unit column of a unitary. 
 
Equivalently in Arveson's language: since $\phi(A) = X^*AX$ is pure; Arveson's Radon--Nikodym theorem says every \cp map dominated by $\phi$ is a scalar multiple $c\phi$, and $A \mapsto X_r^* A X_r$ is dominated by $\phi$. Getting $X_r = c_r X$ (rather than only $X_r^* A X_r = |c_r|^2 X^* A X$) requires the extra step that the Radon--Nikodym derivative lives in $\pi(M_m)' = \mathbb{C} I$.
	\end{proof}
	
	We are now ready to present the full proof of the theorem.
	
	\begin{proof}[Proof of Theorem ~\ref{thm:unique_CP}]
		Let $\Phi: M_{mn}(\mathbb{C}) \to M_m(\mathbb{C})$ be an arbitrary \cp extension of $\phi$. By Choi's Theorem in ~\cite{Choi}, there exist matrix operators $W_r \in M_{mn, m}(\mathbb{C})$ for $r = 1, \dots, N$ such that
		\begin{equation}\label{eq:phi_kraus}
			\Phi(X) = \sum_{r=1}^N W_r^* X W_r, \quad \forall X \in M_{mn}(\mathbb{C}).
		\end{equation}
		We can partition each Kraus operator $W_r$ into $W_r = (W_{1,r}, W_{2,r}, \ldots, W_{n,r})^t$ 
		%\[
		%W_r = \begin{pmatrix} W_{1,r} \\ W_{2,r} \\ \vdots \\ W_{n,r} \end{pmatrix}
		%\]
		where each $W_{k,r} \in M_m(\mathbb{C})$. Since $\Phi$ extends $\phi$, we must have $\Phi(T) = \phi(T)$ for all $T \in \cT_{n,m}$. Writing this explicitly in terms of the block structures yields
		\begin{equation}\label{eq:toeplitz_match}
			\sum_{r=1}^N \sum_{j,k=1}^n W_{j,r}^* T_{j-k} W_{k,r} = \sum_{j,k=1}^n V_j^* T_{j-k} V_k
		\end{equation}
		for every block Toeplitz matrix $T = ((T_{j-k})) \in \cT_{n,m}$. By choosing $T$ to be a block Toeplitz matrix that has a single non-zero block $A \in M_m(\mathbb{C})$ on its $\ell$-th diagonal and zeroes everywhere else, ~\eqref{eq:toeplitz_match} simplifies to
		\begin{equation}\label{eq:diagonal_match}
			\sum_{r=1}^N \sum_{j-k=\ell} W_{j,r}^* A W_{k,r} = \sum_{j-k=\ell} V_j^* A V_k, \quad \forall A \in M_m(\mathbb{C}).
		\end{equation}
		For each $r=1, \ldots, N$, define the matrix-valued polynomial $Q_{W_r}(z)=\sum\limits_{k=1}^n z^{n-k} W_{k,r}$.\\ 
		Let $z$ be a complex number on the unit circle $\mathbb{T}$. Consider the expression $Q_V(z)^* A Q_V(z)$ for an arbitrary $A \in M_m(\mathbb{C})$. Expanding it gives
		\begin{align*}
			Q_V(z)^* A Q_V(z) &= \sum_{\ell=-(n-1)}^{n-1} z^\ell \left( \sum_{j-k=l} V_j^* A V_k \right).
		\end{align*}
		Applying the diagonal relation from ~\eqref{eq:diagonal_match} to the inner sum, we obtain
		%\begin{align*}
			%Q_V(z)^* A Q_V(z) &= \sum_{\ell=-(n-1)}^{n-1} z^l \left( \sum_{r=1}^N %\sum_{j-k=\ell} W_{j,r}^* A W_{k,r} \right) \\
			%&= \sum_{r=1}^N \sum_{j,k=1}^n z^{j-k} W_{j,r}^* A W_{k,r} \\
			%&= \sum_{r=1}^N Q_{W_r}(z)^* A Q_{W_r}(z)
		%\end{align*}
		%Thus, for all $z \in \mathbb{T}$ and all $A \in M_m(\mathbb{C})$, we have the fundamental identity
		\begin{equation}\label{eq:poly_identity}
			\sum_{r=1}^N Q_{W_r}(z)^* A Q_{W_r}(z) = Q_V(z)^* A Q_V(z)
		\end{equation}
		for all $z \in \mathbb{T}$ and all $A \in M_m(\mathbb{C})$.
		
		Fix any $z \in \mathbb{T}$. By applying Lemma ~\ref{lem:kraus} directly to ~\eqref{eq:poly_identity} with $X_r = Q_{W_r}(z)$ and $X = Q_V(z)$, there exist scalar coefficients $c_r(z) \in \mathbb{C}$ such that
			\begin{equation}\label{eq:pointwise_scalar}
				Q_{W_r}(z) = c_r(z) Q_V(z), \quad \forall r = 1, \dots, N
			\end{equation}
			and these coefficients satisfy
			\begin{equation}\label{eq:circle_norm}
				\sum_{r=1}^N |c_r(z)|^2 = 1, \quad \forall z \in \mathbb{T}.
			\end{equation}
			
			Since $V_1 \neq 0$, the matrix-valued polynomial $Q_V(z)$ has degree exactly $n-1$ and is not identically zero. Therefore, there exists indices $i,j$ such that the $(i,j)$-th entry of $Q_V$, denoted by $p(z) = Q_V(z)_{ij}$, is a non-zero scalar polynomial. From ~\eqref{eq:pointwise_scalar}, for any $z \in \mathbb{T}$ where $p(z) \neq 0$, we can express $c_r(z)$ as
			\begin{equation*}
				c_r(z) = \frac{Q_{W_r}(z)_{ij}}{Q_V(z)_{ij}} = \frac{Q_{W_r}(z)_{ij}}{p(z)}.
			\end{equation*}
			Putting this value in ~\eqref{eq:pointwise_scalar}, we get for $z \in \T$ where $p(z) \neq 0$, 
			\begin{equation*}
				Q_{W_r}(z) = \frac{Q_{W_r}(z)_{ij}}{p(z)} Q_V(z).
			\end{equation*}
			So, following the same arguments as in the proof of Lemma ~\ref{lem:monomial} we get that $ p(z) Q_{W_r}(z) = Q_{W_r}(z)_{ij} Q_V(z)$ for all $z \in \C.$
			% This demonstrates that $c_r(z)$ is a rational function restricted to the unit circle. By identity theorems for meromorphic functions, $c_r(z)$ extends uniquely to a rational function over the entire complex plane $\mathbb{C}$.
			Let us write each rational function defined on $\C$ in lowest terms as $$f_r(z) =\frac{Q_{W_r}(z)_{ij}}{p(z)}  = \frac{A_r(z)}{B(z)}$$ where $A_1(z), \ldots, A_N(z)$ and $B(z)$ are polynomials sharing no common polynomial factor of degree $\ge 1$. Substituting this into ~\eqref{eq:pointwise_scalar} and clearing denominators gives
			\begin{equation}\label{eq:poly_equality_all}
				B(z) Q_{W_r}(z) = A_r(z) Q_V(z), \quad \forall z \in \mathbb{C}, \, \forall r = 1, \ldots, N.
			\end{equation}
		Suppose for contradiction that $B(z)$ is not a constant polynomial. Then there must exist some root $z_0 \in \mathbb{C}$ such that $B(z_0) = 0$. Evaluating ~\eqref{eq:poly_equality_all} at $z = z_0$ gives
		\[
		0 = A_r(z_0) Q_V(z_0), \quad \forall r = 1, \dots, N.
		\]
		Because the set of polynomials $\{A_1, \dots, A_N, B\}$ shares no common roots, it is impossible for all $A_r(z_0)$ to equal zero simultaneously. Thus, there exists an index $r_0$ such that $A_{r_0}(z_0) \neq 0$ giving
		\begin{equation*}
			Q_V(z_0) = 0.
		\end{equation*}
		By the hypothesis of the theorem, all points where $Q_V(z)$ vanishes must lie on the unit circle $\mathbb{T}$. Thus, $z_0 \in \mathbb{T}$. Since on $\T,\; f_r(z)=c_r(z)$. From ~\eqref{eq:circle_norm}, the relation $\sum_{r=1}^N |A_r(z)|^2 = |B(z)|^2$ holds identically on $\mathbb{T}$ by continuity. Evaluating this at $z = z_0 \in \mathbb{T}$ yields
		\[
		\sum_{r=1}^N |A_r(z_0)|^2 = |B(z_0)|^2 = 0 \implies A_r(z_0) = 0, \qquad      \forall r = 1, \ldots, N.
		\]
		This directly contradicts the fact that $\gcd(A_1, \dots, A_N, B) = 1$. Consequently, $B(z)$ cannot possess any roots, meaning $B(z) = b \in \mathbb{C} \setminus \{0\}$ is a non-zero constant. Thus, each $f_r(z) = \frac{A_r(z)}{b}$ is a polynomial.
		
		\smallskip
		We now bound the degree of the polynomials $f_r(z)$. Since $\deg(Q_V) = n-1$ and $\deg(Q_{W_r}) \le n-1$. If $f_r(z)$ are non-zero polynomial of degree $d_r \ge 1$, then the degree of the product on the right-hand side of ~\eqref{eq:pointwise_scalar} would be $\deg(f_r Q_V) = d_r + \deg(Q_V) = d_r + n -1$. Equating degrees in $Q_{W_r}(z) = f_r(z) Q_V(z)$ gives $d_r + n - 1 \le n - 1$ imply $d_r \le 0.$ This implies that $d_r = 0$, meaning each $f_r(z) = c_r$ is a constant scalar.
		
		Since $f_r(z) = c_r$ is constant, we can equate coefficients of $z^{n-k}$ for each $k = 1, \dots, n$ in the polynomial identity
		\[
		\sum_{k=1}^n z^{n-k} W_{k,r} = c_r \sum_{k=1}^n z^{n-k} V_k.
		\]
		This gives us that $W_{k,r} = c_r V_k$  for all $k = 1, \dots, n$ and for all $r = 1, \ldots, N$. We conclude that for each $r$, $W_r = c_r V $.
		
		Finally, substituting this expression back into the Choi's representation of $\Phi$ given by ~\eqref{eq:phi_kraus} for an arbitrary matrix $X \in M_{mn}(\mathbb{C})$, we find
		\[
		\Phi(X) = \sum_{r=1}^N (c_r V)^* X (c_r V) = \left( \sum_{r=1}^N |c_r|^2 \right) V^* X V.
		\]
		Since $\sum_{r=1}^N |c_r|^2 = 1$ from ~\eqref{eq:circle_norm}, this simplifies to $\Phi(X) = V^* X V.$ This completes the proof.
	\end{proof}

  \section{Hausdorff Convergence}

\subsection{Proof of Theorem ~\ref{thm:existenceQ_V}}

The following lemma is a generalization of [~\cite{Arveson69}, Theorem 1.4.10]. It may be a folklore. We did not find the exact reference and hence give a short proof.
\begin{lem}
	The extreme points of $\mathcal{Y}_m$, denoted $\text{Ext}(\mathcal{Y}_m)$, consist exactly of discrete \ucp maps $\Psi$ supported on a finite subset $\{t_1, \dots, t_k\} \subset \mathbb{T}$ of the form
	\[
	\Psi(f) = \sum_{j=1}^k \sum_{\alpha=1}^{n_j} A_{j,\alpha}^* f(t_j) A_{j,\alpha},
	\]
	where $A_{j,\alpha} \in M_m(\C)$ and satisfying $\sum_{j,\alpha} A_{j,\alpha}^* A_{j,\alpha} = I_m$. Consequently, any $\Phi \in \mathcal{Y}_m$ can be approximated in the $\rho$-metric by a finite convex combination of such extreme points.
\end{lem}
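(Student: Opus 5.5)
The plan is to prove the inclusion that the approximation statement needs: every extreme point of $\mathcal{Y}_m$ has the displayed discrete form. The approximation statement then follows from Krein--Milman. I will use Arveson's Radon--Nikodym criterion for extremality together with the spectral theorem for the commutative part $C(\mathbb{T})\otimes I_m$ of $C(\mathbb{T},M_m(\C)) \cong C(\mathbb{T})\otimes M_m(\C)$.

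Let $\Psi \in \text{Ext}(\mathcal{Y}_m)$ and take its minimal Stinespring dilation $\Psi(f) = V^*\pi(f)V$, with $\pi$ a unital $*$-representation on a Hilbert space $\mathcal{K}$ and $V\colon \C^m \to \mathcal{K}$ an isometry. Since $\Psi$ is unital, Arveson's criterion applies: the map $X \mapsto V^*XV$ is injective on the commutant $\pi(C(\mathbb{T},M_m(\C)))'$. Restrict $\pi$ to $C(\mathbb{T})\otimes I_m$; this gives a projection-valued spectral measure $E$ on $\mathbb{T}$. Because $C(\mathbb{T})\otimes I_m$ is central, each $E(B)$ commutes with all of $\pi(C(\mathbb{T},M_m(\C)))$, so the von Neumann algebra $\{E(B)\}''$ lies in the commutant. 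Injectivity of $X\mapsto V^*XV$ into $M_m(\C)$ forces $\dim \{E(B)\}'' \le m^2$. A commutative von Neumann algebra of finite dimension is generated by finitely many minimal projections, so $E$ is purely atomic with atoms $t_1,\dots,t_k$, where $k\le m^2$.

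On each reduced space $\mathcal{K}_j = E(\{t_j\})\mathcal{K}$, every $f$ acts as $f(t_j)$ under $\pi$. Thus $\pi|_{\mathcal{K}_j}$ is a unital representation of $M_m(\C)$, hence unitarily equivalent to $\mathrm{id}\otimes I_{n_j}$. Minimality together with $\dim\mathcal{K}\le\dim(\pi(A)V\C^m)<\infty$ makes $n_j$ finite. Compressing $V$ to the $n_j$ copies of $\C^m$ gives matrices $A_{j,\alpha}$ with
\[
\Psi(f)=\sum_{j}\sum_{\alpha}A_{j,\alpha}^*f(t_j)A_{j,\alpha}.
\]
The relation $\sum_{j,\alpha} A_{j,\alpha}^*A_{j,\alpha}=V^*V=I_m$ follows from unitality.

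For the consequence: $\mathcal{Y}_m$ is compact and convex in the BW topology (by [Paul, Theorem 7.4], as quoted earlier), and $\rho$ metrizes that topology. Krein--Milman then says the convex hull of $\text{Ext}(\mathcal{Y}_m)$ is BW-dense, hence $\rho$-dense, in $\mathcal{Y}_m$. By the previous step, every element of that convex hull is a finite convex combination of discrete maps of the stated form.

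The main obstacle is the word \emph{exactly}. The converse fails as stated: a convex combination of two distinct point evaluations, such as $f\mapsto \tfrac12 f(t_1)+\tfrac12 f(t_2)$ when $m=1$, has the displayed form but is not extreme. I would therefore either record only the inclusion, which is all the density statement needs, or add Arveson's injectivity condition to the form. In the discrete model that condition reads: the map
\[
\bigoplus_j (I_m\otimes M_{n_j}) \ni (X_j)_j \;\longmapsto\; \sum_j A_j^* X_j A_j,
\]
where $A_j$ is the stacked column of the $A_{j,\alpha}$, is injective. The second technical point to check carefully is the dimension bound: the commutant of a representation on $\mathcal{K}$ contains the full algebra $\{E(B)\}''$, so injectivity bounds its dimension by $m^2$, and this is what excludes a diffuse part of $E$.
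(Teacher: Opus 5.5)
Your argument is correct and follows the paper's route: minimal Stinespring dilation, then Arveson's injectivity criterion on $\pi(C(\T,M_m(\C)))'$ to get a finite direct sum of point evaluations, then Krein--Milman. Your spectral measure of the central subalgebra $C(\T)\otimes I_m$, whose range lies in the commutant, supplies the justification the paper leaves implicit when it asserts that a finite-dimensional commutant forces $\pi$ to be supported on finitely many points.

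You are also right that only the inclusion of $\text{Ext}(\mathcal{Y}_m)$ in the set of discrete maps holds: $f\mapsto \tfrac12 f(t_1)+\tfrac12 f(t_2)$ has the displayed form but is not extreme. The paper likewise proves only this direction, and it is all the Krein--Milman consequence needs.
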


\begin{proof}
	 Let $\Psi \in \mathcal{Y}_m$. By Stinespring's Dilation Theorem, $\Psi(f) = W^* \pi(f) W$ where $\pi$ is a unital $*$-representation of $C(\mathbb{T}, M_m(\mathbb{C}))$ on a Hilbert space $\mathcal{H}$, and $W: \mathbb{C}^m \to \mathcal{H}$ is an isometry. By [~\cite{Arveson69},Theorem 1.4.6], $\Psi$ is extreme if and only if the map $\mathcal{F}: \pi(C(\mathbb{T}, M_m(\mathbb{C})))' \to M_m(\mathbb{C})$ given by $\mathcal{F}(T) = W^* T W$ is injective. This forces the commutant to be finite-dimensional which cannot happen unless $\pi(C(\mathbb{T}, M_m(\mathbb{C})))$ is finite-dimensional. This means that the representation $\pi$ is a direct sum of irreducible finite-dimensional representations. The only irreducible finite-dimensional representations of $C(\mathbb{T}, M_m(\mathbb{C}))$ are point evaluations. Thus $\pi$ is supported on a finite set of points $\{t_1, \dots, t_k\} \subset \mathbb{T}$. So, $\mathcal{H} = \oplus_{j=1}^k \mathcal{H}_j\; \text{where}\; \mathcal{H}_j \cong \C^m \otimes \C^{n_j}$ and on each component space $\mathcal{H}_j$, the representation acts as
	 \[
	  \pi(f)|_{\mathcal{H}_j}= f(t_j) \otimes I_{n_j}, \; \text{for}\; f \in C(\T).
	 \]
	 Decomposing the isometry $W$ into matrix operators
	 \[ W: C^m \to \mathcal{H} = \oplus_{j=1}^k (\C^m \otimes \C^{n_j} )
	 \]
	 we first get $W_j: \C^m \to \C^m \otimes \C^{n_j}$. The $A_{j, \alpha} \in M_m(\C)$, for $\alpha = 1,2, \ldots , n_j$ are the ``components" of $W_j$. Substituting $\pi(f)$ and $W$
 	back into Stinespring's formula we get the required form of extreme points.		
		 
		Since $\mathcal{Y}_m$ is a compact, convex, metrizable topological space, the Krein-Milman theorem (see ~\cite{WW}) gives that $\mathcal{Y}_m = \overline{\text{co}(\text{Ext}(\mathcal{Y}_m))}$. Hence, any map $\Phi \in \mathcal{Y}_m$ can be approximated arbitrarily closely by a finite convex combination of elements from $\text{Ext}(\mathcal{Y}_m)$.
\end{proof}

The elements of the finite subset of the unit circle in the above result can be made distinct. Recall $\mathcal L$ from Notation ~\ref{L} and $\rho$ from Definition ~\ref{rho}.
\begin{lem}[Point-Splitting]
	Let $\Phi = \sum_{p=1}^L \lambda_p \Psi_p$ be an arbitrary finite convex combination of extreme points in $\mathcal{Y}_m$. Then for any $\epsilon > 0$, there exists a \ucp map $\Phi_0 \in \mathcal{Y}_m$ such that
	\[
	\Phi_0(f) = \sum_{i=1}^N B_i^* f(x_i) B_i,
	\]
	where the points $x_1, x_2, \dots, x_N \in \mathbb{T}$ are distinct and $\rho(\Phi, \Phi_0) < \epsilon$.
\end{lem}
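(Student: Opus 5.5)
Since each $\Psi_p$ is an extreme point, the preceding lemma gives $\Psi_p(f)=\sum_{j,\alpha}A^{(p)*}_{j,\alpha} f(t^{(p)}_j)A^{(p)}_{j,\alpha}$. Substituting into $\Phi=\sum_p\lambda_p\Psi_p$ and absorbing $\lambda_p^{1/2}$ into the coefficients, I write
\[
\Phi(f)=\sum_{q=1}^{N} C_q^* f(s_q) C_q,\qquad \sum_{q} C_q^*C_q = I_m,
\]
where the $s_q\in\T$ are listed with repetitions: the same point may occur for several indices $\alpha$ within one $\Psi_p$, or in different $\Psi_p$. The plan is to keep the coefficients $C_q$ unchanged and move the nodes slightly so that they become pairwise distinct.

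Choose points $x_1,\dots,x_N\in\T$, pairwise distinct, with $|x_q-s_q|<\delta$ for every $q$. This is possible because each point of $\T$ has uncountably many neighbours within distance $\delta$; one can take $x_q=s_q e^{i\theta_q}$ with distinct small angles $\theta_q$ that also avoid the finitely many coincidences. Set $B_q=C_q$ and $\Phi_0(f)=\sum_q B_q^*f(x_q)B_q$. Then $\Phi_0$ is completely positive, being a sum of compressions of point evaluations, and it is unital because $\sum B_q^*B_q=I_m$. Hence $\Phi_0\in\Ym$ and it has the required form.

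For the estimate, fix $f\in\mathcal L$. The bound $\|f'\|_\infty\le 1$ gives the Lipschitz estimate $\|f(x)-f(s)\|\le |\arg(x/s)|\le \tfrac{\pi}{2}|x-s|$ along the shorter arc; the exact constant depends on whether $f'$ is taken with respect to arc length, but any fixed constant suffices. Then
\[
\|\Phi(f)-\Phi_0(f)\|=\Big\|\sum_q C_q^*\big(f(s_q)-f(x_q)\big)C_q\Big\|\le \frac{\pi}{2}\,\delta\,\Big\|\sum_q C_q^*C_q\Big\| = \frac{\pi}{2}\,\delta .
\]
The inequality holds because the map $D\mapsto\sum_q C_q^* D_q C_q$, from block-diagonal matrices $D=\mathrm{diag}(D_q)$ to $M_m(\C)$, is completely positive with norm $\|\sum_q C_q^*C_q\|=1$, and $\|\mathrm{diag}(D_q)\|=\max_q\|D_q\|$. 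Taking the supremum over $f\in\mathcal L$ gives $\rho(\Phi,\Phi_0)\le \tfrac{\pi}{2}\delta<\epsilon$ once $\delta<2\epsilon/\pi$.

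The main point needing care is this norm estimate. A naive triangle inequality gives $\sum_q\|C_q\|^2\delta$, which is still finite and would also work after shrinking $\delta$. The cleaner bound above instead uses the Russo–Dye / Kadison-type fact that a completely positive map attains its norm at the identity. Everything else is bookkeeping of indices.
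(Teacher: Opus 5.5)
Your proposal is correct and takes essentially the paper's approach: expand the extreme points, keep the coefficients $C_q$ fixed, and move the (possibly repeated) nodes by less than $\delta$ to pairwise distinct points of the circle. The only differences are refinements in the estimate. Your bound $\rho(\Phi,\Phi_0)\le\frac{\pi}{2}\delta$, obtained from $\|C^*DC\|\le\|C^*C\|\,\|D\|$, is sharper than the paper's triangle-inequality bound $\delta\sum_{r,\beta}\|C_{r,\beta}\|^2$. Your arc-versus-chord factor $\frac{\pi}{2}$ also handles a point the paper glosses over, since it simply asserts $\|g(T_r)-g(x_{r,\beta})\|\le|T_r-x_{r,\beta}|$; neither refinement matters, because $\delta$ is arbitrary.
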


\begin{proof}
	By expanding the definitions of the extreme points $\Psi_p$ and absorbing the convex weights $\lambda_p$, collect all overlapping evaluation points into a single set of unique distinct points $\{T_1, T_2, \dots, T_R\} \subset \mathbb{T}$. The combination $\Phi$ can be written exactly as
	\[
	\Phi(f) = \sum_{r=1}^R \sum_{\beta=1}^{M_r} C_{r,\beta}^* f(T_r) C_{r,\beta}.
	\]
	% where $T_1, \dots, T_R$ are strictly distinct
	% and the Kraus operators satisfy the localized allocation constraints $\sum_{\beta=1}^{M_r} C_{r,\beta}^* C_{r,\beta} = \gamma_r I_m$ with $\sum_{r=1}^R \gamma_r = 1$, ensuring $\sum_{r=1}^R \sum_{\beta=1}^{M_r} C_{r,\beta}^* C_{r,\beta} = I_m$.
	
	For each point $T_r$, we select $M_r$ distinct points $\{x_{r,1}, x_{r,2}, \dots, x_{r,M_r}\} \subset \mathbb{T}$ within a radius of $\delta > 0$ around $T_r$, such that $|x_{r,\beta} - T_r| < \delta$. We choose $\delta$ small enough to prevent them from overlapping and ensuring that all $N = \sum_{r=1}^R M_r$ elements in the set $\{x_{r,\beta}\}$ are distinct. We define the perturbed map $\Phi_0$ as
	\[
	\Phi_0(f) = \sum_{r=1}^R \sum_{\beta=1}^{M_r} C_{r,\beta}^* f(x_{r,\beta}) C_{r,\beta}.
	\]
	Clearly, $\Phi_0(I_m) = \sum_{r, \beta} C_{r,\beta}^* C_{r,\beta} = I_m$. Now, let $g \in \mathcal{L}$. 
	\begin{align*}
		\|\Phi(g) - \Phi_0(g)\|_{M_m(\mathbb{C})} &= \left\| \sum_{r=1}^R \sum_{\beta=1}^{M_r} C_{r,\beta}^* \big(g(T_r) - g(x_{r,\beta})\big) C_{r,\beta} \right\|_{M_m(\mathbb{C})} \\
		&\le \sum_{r=1}^R \sum_{\beta=1}^{M_r} \|C_{r,\beta}\|^2 \cdot \|g(T_r) - g(x_{r,\beta})\|_{M_m(\mathbb{C})}.
	\end{align*}
	Since $g \in \mathcal{L}$, its Lipschitz constant is bounded by 1, giving $\|g(T_r) - g(x_{r,\beta})\|_{M_m(\mathbb{C})} \le |T_r - x_{r,\beta}| < \delta$. Substituting this bound gives
	\[
	\|\Phi(g) - \Phi_0(g)\|_{M_m(\mathbb{C})} \le \delta \sum_{r=1}^R \sum_{\beta=1}^{M_r} \|C_{r,\beta}\|^2.
	\]
	Because this upper bound is uniform and entirely independent of the choice of $g$, it bounds the supremum
	\[
	\rho(\Phi, \Phi_0) \le \delta \sum_{r=1}^R \sum_{\beta=1}^{M_r} \|C_{r,\beta}\|^2.
	\]
	By specifying $\delta < \epsilon \left(\sum_{r,\beta} \|C_{r,\beta}\|^2\right)^{-1}$, we guarantee that $\rho(\Phi, \Phi_0) < \epsilon$.
\end{proof}

In fact, the operators $B_i$ can be chosen to be invertible by making use of an interesting result from linear algebra.

\begin{lem}\label{lem:invertible}
	Given any $\epsilon >0$ and a \ucp map $\Phi_0(f) = \sum_{i=1}^N B_i^* f(x_i) B_i$ with distinct points $x_i \in \mathbb{T}$, there exists a \ucp map 
	\[\Phi'(f) = \sum_{i=1}^N \lambda_i A_i^* f(x_i) A_i\] 
	such that $\lambda_i > 0$, $A_i \in SL_m(\mathbb{C})$, and $\rho(\Phi_0, \Phi') < \epsilon$. Furthermore, there exists a matrix-valued polynomial $P(z) \in M_m(\mathbb{C}[z])$ such that $P(x_i) = A_i$ for all $i$, and $\det(P(z)) = 1$ identically for all $z \in \mathbb{C}$.
\end{lem}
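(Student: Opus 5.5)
The argument has two independent parts: a perturbation that makes each $B_i$ invertible while keeping the map unital, and an interpolation producing a polynomial of determinant $1$.

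\emph{Perturbation.} First I perturb each $B_i$ to an invertible matrix $\tilde B_i = B_i + \eta E_i$. Since the singular matrices form a proper algebraic subset, for all but finitely many small $\eta$ one may simply take $E_i = I_m$, so $\tilde B_i$ is invertible with $\|\tilde B_i - B_i\|\le \eta$. The perturbed map $\tilde\Phi(f)=\sum_i \tilde B_i^* f(x_i)\tilde B_i$ is no longer unital, but $S=\sum_i \tilde B_i^*\tilde B_i$ is positive, invertible and within $O(\eta)$ of $I_m$. Set $B_i' = \tilde B_i S^{-1/2}$. These are still invertible and satisfy $\sum_i B_i'^* B_i' = I_m$, and $\|B_i'-B_i\| = O(\eta)$. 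For $g\in\mathcal L$ we have $\|g\|_\infty\le 1$, so
\[
\|B_i'^* g(x_i) B_i' - B_i^* g(x_i) B_i\| \le (\|B_i'\|+\|B_i\|)\,\|B_i'-B_i\|,
\]
uniformly in $g$. Summing over $i$ gives $\rho(\Phi_0,\Phi')=O(\eta)<\epsilon$ for small $\eta$. Next I normalize the determinant: write $B_i' = \lambda_i^{1/2} A_i$ with $\lambda_i = |\det B_i'|^{2/m}>0$ and $A_i = \lambda_i^{-1/2}B_i'$. Then $|\det A_i| = 1$, so $\det A_i$ is a unimodular scalar. Replacing $A_i$ by $\omega_i A_i$ with $\omega_i^m = \overline{\det A_i}$ changes nothing in $A_i^* f(x_i) A_i$ and makes $A_i\in SL_m(\C)$. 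This gives the first assertion.

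\emph{Interpolation.} I need $P\in M_m(\C[z])$ with $\det P\equiv 1$ and $P(x_i)=A_i$ at the distinct nodes $x_i$. Since $SL_m(\C)$ is generated by elementary matrices $I+cE_{kl}$ ($k\ne l$), and a bounded number of factors suffices, I will write each $A_i=\prod_{s=1}^K (I + c_{i,s}E_{k_s l_s})$ using a \emph{common} word in the positions $(k_s,l_s)$. Such a common pattern exists because any fixed Gaussian-elimination pattern (lower, upper, lower, ... unitriangular products, say $4$ unitriangular factors) covers all of $SL_m(\C)$. Then Lagrange interpolation gives scalar polynomials $p_s$ with $p_s(x_i)=c_{i,s}$. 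Each factor $I+p_s(z)E_{k_sl_s}$ has determinant $1$ for all $z$, so
\[
P(z)=\prod_{s=1}^K \bigl(I+p_s(z)E_{k_sl_s}\bigr)
\]
has $\det P\equiv 1$ and $P(x_i)=A_i$.

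\emph{Main obstacle.} The delicate point is the uniform factorization step. I need a fixed word of elementary (or unitriangular) factor positions that represents every element of $SL_m(\C)$. This is a known result: every matrix in $SL_m$ over a field is a product of at most four unitriangular matrices alternating lower and upper. If that is awkward to cite, there is a fallback. Use the path-connectedness of $SL_m(\C)$ and the fact that $SL_m(\C[z])=E_m(\C[z])$ since $\C[z]$ is Euclidean. Then apply the Chinese remainder theorem: $\C[z]\to\prod_i\C$ via evaluation at the $x_i$ is surjective, and elementary matrices lift under surjective ring maps. Hence $E_m(\C[z])$ surjects onto $\prod_i E_m(\C)=\prod_i SL_m(\C)$, which yields $P$ directly.
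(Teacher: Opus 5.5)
Your proposal is correct and follows the paper's route: perturb the $B_i$ to invertible matrices, renormalize by $S^{-1/2}$, rescale by a scalar to land in $SL_m(\C)$, factor each $A_i$ into elementary matrices along a common word, and Lagrange-interpolate the coefficients. Your explicit $\eta I$ perturbation and Lipschitz estimate just make the paper's limiting step quantitative. The step you flag as the main obstacle is handled in the paper trivially: only finitely many $A_i$ are involved, so one concatenates their individual words and pads with identity factors $E_{j,k}(0)=I_m$. Hence neither the four-unitriangular-factor theorem nor your $E_m(\C[z])$/Chinese-remainder fallback is needed (the fallback is this same padding trick in ring-theoretic form).
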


\begin{proof}
		The general linear group $GL_m(\mathbb{C})$ is open and dense in $M_m(\mathbb{C})$. We can select invertible matrices $\tilde{B}_{i,k} \in GL_m(\mathbb{C})$ arbitrarily close to $B_i$. Let \[S_k= \sum_{i=1}^N \tilde{B}_{i,k}^* \tilde{B}_{i,k}.\] As $\tilde{B}_{i,k} \to B_i$, for sufficiently large $k> k_0$, $S_k$ is strictly positive definite. Define the normalized matrices $A_{i,k}' = \tilde{B}_{i,k} S_k^{-1/2}$, which satisfy $\sum_{i=1}^N (A_{i,k}')^*  A_{i,k}' = I_m$ and $A_{i,k}' \to B_{i}$. Now let $\Phi'$ be the map defined using $A_i'=A_{i,k}'$, where $k$ is chosen large enough such that $\rho(\Phi', \Phi_0)< \epsilon$.
		
		Let $c_i = \det(A_i')^{1/m} \in \mathbb{C}\setminus\{0\}$ and define $A_i = c_i^{-1} A_i' \in SL_m(\mathbb{C})$, so $\det(A_i) = 1$. Setting $\lambda_i = |c_i|^2 > 0$, the map is written exactly as $\Phi'(f) = \sum_{i=1}^N \lambda_i A_i^* f(x_i) A_i$.
		
		We now construct the matrix-valued polynomial $P(z)$. A basic theorem of linear algebra states that $SL_m(\mathbb{C})$ is generated by elementary matrices $E_{j,k}(c) = I_m + c \cdot e_{j,k}$ ($j \neq k$), see [~\cite{Artin}, Page 71]. Every $A_i$ can be factored into a finite product of elementary matrices. By inserting identity elements $E_{j,k}(0) = I_m$ we can combine factorization into one product
		\[
		A_i = \prod_{\alpha=1}^M E_{j_\alpha, k_\alpha}(c_{i, \alpha}) \quad \forall i \in \{1, \dots, N\}.
		\]
		For each fixed index $\alpha \in {1, \dots, M}$, applying classical scalar Lagrange interpolation to get the scalar values $\{c_{1,\alpha}, \dots, c_{N,\alpha}\}$ at the distinct points $x_1, \dots, x_N \in \mathbb{T}$. This yields a scalar polynomial $p_\alpha(z) \in \mathbb{C}[z]$ satisfying $p_\alpha(x_i) = c_{i,\alpha}$. We define a matrix-valued polynomial
		\[
		P(z) = \prod_{\alpha=1}^M E_{j_\alpha, k_\alpha}(p_\alpha(z)).
		\]
		Since each factor satisfies $\det(E_{j_\alpha, k_\alpha}(p_\alpha(z))) = 1$, gives us that $\det(P(z)) = 1 \quad \forall z \in \mathbb{C}.$
		Evaluating at the points gives $P(x_i) = \prod_{\alpha=1}^M E_{j_\alpha, k_\alpha}(p_\alpha(x_i)) = \prod_{\alpha=1}^M E_{j_\alpha, k_\alpha}(c_{i,\alpha}) = A_i$.
	\end{proof}

\textsc{We are now ready to complete the proof of Theorem ~\ref{thm:existenceQ_V}.}

\smallskip

 Because of the reduction made in Lemma ~\ref{lem:invertible}, we only need to consider $\Phi' \in \mathcal{Y}_m$ of the form 
	$
	\Phi'(f)=\sum\limits_{i=1}^N \lambda_i A_i^* f(x_i) A_i
	$
	such that $\lambda_i > 0$, $A_i \in SL_m(\mathbb{C})$ and $\{x_1, \cdots, x_N \}$ are all distinct points in $\mathbb{T}$.
	
	Let $\mu = \sum\limits_{i=1}^N \lambda_i \delta_{x_i}$ be the positive discrete measure on $\mathbb{T}$. Then from [~\cite{Hek}, Proposition 3.5], there exists a sequence of scalar polynomials $q_n(z) \in \mathbb{C}[z]$ such that all roots of $q_n(z) = 0$ lie strictly on $\mathbb{T}$ and the measures $d\mu_n(z) = |q_n(z)|^2 \;dz$ converge weakly to $\mu$ as $n \to \infty$.
	
	\smallskip
	Let $P(z)$ be the matrix-valued polynomial from Lemma ~\ref{lem:invertible}. Define the sequence of matrix-valued polynomials
	\[
	\tilde{Q}_n(z) = q_n(z) P(z) \in M_m(\C[z]).
	\]
	Clearly, the roots of $\tilde{Q}_n$ are on $\T$. Now, consider the sequence of the induced map  $\tilde{\Phi}_n(f) = \int_{\mathbb{T}} \tilde{Q}_n^*(z) f(z) \tilde{Q}_n(z) \, dz$. Expanding the terms gives
	\[
	\tilde{\Phi}_n(f) = \int_{\mathbb{T}} |q_n(z)|^2 P(z)^* f(z) P(z) \, dz.
	\]
	Clearly, $\tilde{\Phi}_n$ is a completely positive map and let
	\[
	C_n=\tilde{\Phi}_n(I)= \int_{\mathbb{T}} |q_n(z)|^2 P(z)^* P(z) \, dz.
	\]
	Let $G(z) = P(z)^* f(z) P(z)$. Since $P(z)$ is a polynomial and $f(z)$ is continuous, $G \in C(\mathbb{T}, M_m(\mathbb{C}))$. The entry-wise convergence give us that 
	\[
	\lim_{n \to \infty} \int_{\mathbb{T}} G_{j,k}(z) d\mu_n(z) = \sum_{i=1}^N \lambda_i G_{j,k}(x_i).
	\]
	Because $M_m(\mathbb{C})$ is a finite-dimensional space, entry-wise convergence is equivalent to operator norm convergence. Thus,
	\[
	\lim_{n \to \infty} \left\| \tilde{\Phi}_n(f) - \sum_{i=1}^N \lambda_i P(x_i)^* f(x_i) P(x_i) \right\|_{M_m(\mathbb{C})} = \;\;0.
	\]
	Substituting back the values of $P(x_i) = A_i$ gives
	\[
	\lim_{n \to \infty} \tilde{\Phi}_n(f) = \sum_{i=1}^N \lambda_i A_i^* f(x_i) A_i = \Phi'(f).
	\]
	This pointwise norm convergence for every $f$ implies that $\tilde{\Phi}_n \to \Phi'$ in the BW topology, meaning $\rho \left( \tilde{\Phi}_n, \Phi'\right) \to 0$.
	And we also get that $C_n$ converges to $I_m$, which implies that $C_n$ is invertible for all $n$ sufficiently large. Now define $Q_n(z)=\tilde{Q}_n(z)(C_n)^{(-1/2)}$ and induced map $\Phi_n$ to be 
	\[
	\Phi_n(f) =  \int_\T Q_n(z)^*f(z)Q_n(z)\;dz = (C_n)^{(-1/2)}\; \tilde{\Phi}_n(f) (C_n)^{(-1/2)}.
	\]
	This gives that $\Phi_n$ is a \ucp map. Hence, we can choose $n$ large enough so that $\rho(\Phi_n, \Phi') < \frac{\epsilon}{4}$. Applying the triangle inequality across all steps and using previous lemmas
	\[
	\rho(\Phi, \Phi_n) \le \rho(\Phi, \Phi_0) + \rho(\Phi_0, \Phi') + \rho(\Phi', \Phi_n) < \frac{\epsilon}{2} + \frac{\epsilon}{4} + \frac{\epsilon}{4} = \epsilon.
	\]
	Setting $Q_V(z) = Q_n(z)$ and $\Phi_{Q_{V}} = \Phi_n$ completes the entire proof.
	
	\subsection{Proof of Theorem ~\ref{thm:hauscong}}
There are many equivalent definitions of Hausdorff distance, we will use the following from ~\cite{Burago}. It is 
	\[ d_H(\Bmn, \Ym) = \max \left\{ \sup_{\Phi_{Q_V} \in \Bmn} \rho(\Phi_{Q_V}, \Ym), \; \sup_{\Phi \in \Ym} \rho(\Phi, \Bmn) \right\} 
	\]
	Since $\Bmn$ is a subset of $\Ym$ we get that 
	\begin{equation}
		\sup_{\Phi_{Q_V} \in \Bmn} \rho(\Phi_{Q_V}, \Ym) = 0 \quad \forall n \in \mathbb{N}.
	\end{equation}
	Now to evaluate 
	\[
	\sup_{\Phi \in \Ym} \rho(\Phi, \Bmn) = \sup_{\Phi \in \Ym} \left( \inf_{\Psi \in \Bmn} \rho(\Phi, \Psi) \right).
	\]
	Let $\varepsilon > 0$ be given. For each $\Phi \in \Ym$, construct an open ball centered at $\Phi$ with radius $\frac{\epsilon}{2}$
	\[
	U_\Phi = \left\{ \Psi \in \Ym : \rho(\Phi, \Psi) < \frac{\epsilon}{2} \right\}.
	\]
	The collection of open sets $\{U_\Phi\}_{\Phi \in \Ym}$ forms an open cover of the compact set $\Ym$. Thus, there exists a finite collection of points $\{\Phi_1, \Phi_2, \dots, \Phi_k\} \subset \Ym$ such that
	\[
	\Ym \subseteq \bigcup_{i=1}^k U_{\Phi_i}.
	\]
	For each $\Phi_i$, using Theorem ~\ref{thm:existenceQ_V}, there exists an integer $N_i \in \mathbb N$ such that for every $n \ge N_i$, there exists a matrix-valued polynomial $Q_{i,n}$ of degree $n$ with roots on $\mathbb{T}$ and the induced map $\Phi_{Q_{i,n}} \in \Bmn$ satisfies
	\[
	\rho(\Phi_i, \Phi_{Q_{i,n}}) < \frac{\epsilon}{2}.
	\]
	Define $N=  \max \left\{ N_1, N_2, \ldots, N_k \right\}$. Now, let $\Phi$ be an element in $\Ym$ and consider an integer $n \ge N$. Because the finite balls cover the entire space, $\Phi$ must belong $U_{\Phi_i}$ for some $i$, meaning $\rho(\Phi, \Phi_i) < \frac{\epsilon}{2}$.
	Hence,
	\[
	\rho(\Phi, \Bmn) = \inf_{\Psi \in \Bmn} \rho(\Phi, \Psi) \le \rho(\Phi, \Phi_{Q_{i,n}}) \le \rho(\Phi, \Phi_i) + \rho(\Phi_i, \Phi_{Q_{i,n}}).
	\]
	\[
	\rho(\Phi, \Bmn) < \frac{\epsilon}{2} + \frac{\epsilon}{2} = \epsilon \quad \forall n \ge N.
	\]
	Since this strict inequality holds uniformly for every $\Phi \in \Ym$, it bounds the supremum over the entire space
	\[
	\sup_{\Phi \in \Ym} \rho(\Phi, \Bmn) \le \epsilon \quad \forall n \ge N.
	\]
	Because $\epsilon > 0$ was chosen arbitrarily, we get that
	\begin{equation}
		\lim_{n \to \infty} \left( \sup_{\Phi \in \Ym} \rho(\Phi, \Bmn) \right) = 0.
	\end{equation}
	Now, combining both the results, we get that 
	\[
	\lim_{n \to \infty} d_H(\Bmn, \Ym) = \lim_{n \to \infty} \max \left\{ \sup_{\Phi_{Q_V} \in \Bmn} \rho(\Phi_{Q_V}, \Ym), \; \sup_{\Phi \in \Ym} \rho(\Phi, \Bmn) \right\} =0.
	\]

\section{Epilogue: The Gromov-Hausdorff Convergence}
We proved the Hausdorff convergence in detail because the method was new. An expected consequence of the Hausdorff convergence follows. The proof is a minor modifications of ~\cite{Hek}.

\begin{defn}[Gromov-Hausdorff convergence] 
	The Gromov-Hausdorff distance between two metric spaces $(X,d_1)$ and $(Y,d_2)$, to be denoted as $d_{GH}$, is defined to be the infimum of all $r>0$ such that there exists a metric space $Z$ with subsets $X_1, Y_1 \subseteq Z$ isometric to $X$ and $Y$, respectively, with $d_H(X_1, Y_1)<r$, where $d_H(X_1,Y_1)$ is the Hausdorff distance between $X_1$ and $Y_1$.
\end{defn}

We make $\mathcal{UCP}_{\mathbb C^m}(\cT_{n,m})$ (this notation introduced in the beginning of Section ~\ref{PureUCP} stands for all \ucp maps from $\cT_{n,m}$ into $M_m(\mathbb C)$) a metric space by defining $\rho_n : \mathcal{UCP}_{\mathbb C^m}(\cT_{n,m}) \times \mathcal{UCP}_{\mathbb C^m}(\cT_{n,m}) \to [0, \infty)$  as
\[ \rho_n(\phi, \psi) =  \sup \left\{ \|\phi(T) - \psi(T)\|_{M_m(\mathbb{C})} : T \in \cT_{n,m}, \, \|T\| \le 1\; \text{and}\; \|[D_n, T]\| \le 1 \right\},
\]
where $D_n \in M_{mn}(\C)$ is the {\em Dirac operator} $\text{diag}(1,\ldots,n) \otimes I_m$. This metric induces the bounded weak (BW) topology on $\mathcal{UCP}_{\mathbb C^m}(\cT_{n,m})$.

Let $\mathcal{X}_{n,m}$ denote the space of all pure \ucp maps in $\mathcal{UCP}_{\mathbb C^m}(\cT_{n,m})$. Define the map $R_n :C(\T,M_m(\C)) \to \cT_{n,m}$ by $f\mapsto P_nfP_n$, where $P_n$ is the orthogonal projection as defined in Section ~\ref{sec:intro}. It can be easily checked that $\mathcal{X}_{n,m} \circ R_n := \{ \Phi \circ R_n : \Phi \in \mathcal{X}_{n,m} \} = \Bmn$. 
%Note that the space $\mathcal{X}_{n,m}$ need not be a compact space with respect to the restricted metric $\rho_n$. 
A combination of  Theorem ~\ref{thm:hauscong} and the techniques developed in ~\cite{Walter} imply the following.

\begin{theorem}
    $\mathcal{X}_{n,m}$ convereges to $\Ym$ in the Gromov-Hausdorff distance as $n \to \infty.$
\end{theorem}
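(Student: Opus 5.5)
The plan is to realise $\mathcal{X}_{n,m}$ and $\Ym$ inside a common metric space and then apply Theorem~\ref{thm:hauscong}. The natural comparison map is the pullback $\Gamma_n \colon \mathcal{X}_{n,m} \to \Ym$, $\Gamma_n(\phi) = \phi\circ R_n$. By the identity $\mathcal{X}_{n,m}\circ R_n = \Bmn$ noted above, $\Gamma_n$ is onto $\Bmn$. It is injective because $R_n$ maps $C(\T,M_m(\C))$ onto $\cT_{n,m}$: every block Toeplitz matrix is $P_nM_fP_n$ for a trigonometric polynomial $f$. If we could show that $\Gamma_n$ is an isometry from $(\mathcal{X}_{n,m},\rho_n)$ onto $(\Bmn,\rho)$, then $Z=\Ym$ with $X_1=\Bmn$ and $Y_1=\Ym$ would give
\[
d_{GH}(\mathcal{X}_{n,m},\Ym)\le d_H(\Bmn,\Ym)\to 0
\]
directly from Theorem~\ref{thm:hauscong}.

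Exact isometry is not to be expected, however. The Lipschitz balls differ: $\{T:\|T\|\le1,\ \|[D_n,T]\|\le 1\}$ versus $R_n(\mathcal L)$. So, following \cite{Walter} and \cite{Hek}, I would instead prove that $\Gamma_n$ is an $\epsilon_n$-isometry with $\epsilon_n\to0$, i.e.
\[
|\rho(\Gamma_n\phi,\Gamma_n\psi)-\rho_n(\phi,\psi)|\le\epsilon_n
\]
for all $\phi,\psi$. Together with the $d_H(\Bmn,\Ym)$-density of its image, the standard criterion (an $\epsilon$-isometry with $\epsilon$-dense image gives $d_{GH}\le 2\epsilon$ up to constants) then yields the theorem.

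This needs two comparison inequalities.

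\emph{Step (a): $\rho(\Gamma_n\phi,\Gamma_n\psi)\le\rho_n(\phi,\psi)+\epsilon_n$.} For $f\in\mathcal L$, I would show that $R_n(f)$ nearly lies in the $\rho_n$-ball. We have $\|P_nM_fP_n\|\le\|f\|_\infty\le1$. Also $[D_n,R_n(f)]$ is the block Toeplitz matrix with entries $(i-j)\hat f(i-j)$, which is formally $R_n$ applied to $-i f'$ in the angular variable. Hence $\|[D_n,R_n f]\|\le\|f'\|_\infty\le 1$, possibly after a harmless normalisation of the derivative convention. This direction should be routine.

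\emph{Step (b): $\rho_n(\phi,\psi)\le\rho(\Gamma_n\phi,\Gamma_n\psi)+\epsilon_n$.} Given $T$ in the $\rho_n$-ball, I would produce $f\in\mathcal L$, up to a factor $(1+\epsilon_n)$, with $R_n(f)=T$. The approach is a Fej\'er-type construction: take the symbol $F_T(z)=\sum_k z^{-k}T_k$ and damp it by the Fej\'er kernel weights $(1-|k|/n)$. This bounds $\|f\|_\infty$ and $\|f'\|_\infty$ by $\|T\|$ and $\|[D_n,T]\|$. The damping, however, changes $R_n(f)$, and the resulting error $\|R_n f - T\|$ must be bounded uniformly over the Lipschitz ball. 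For this I would use the pure-state integral representation \eqref{phiFandintegral}: the difference $\phi(T)-\phi(R_nf)$ is an integral against $Q_V^*(\cdot)Q_V$, whose mass concentrates as $n$ grows. This is the estimate in \cite{Hek}, adapted entrywise to $M_m(\C)$.

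I expect step (b) to be the main obstacle: recovering a uniformly good $C^1$ symbol from a block Toeplitz matrix with control of both norms, uniformly in the matrix size $m$ of the blocks. If the direct construction fails, the fallback is to replace the isometry requirement by the correspondence-based definition of $d_{GH}$. One would use the correspondence $\{(\phi,\Gamma_n\phi)\}$, show that its distortion tends to zero using only the BW-compactness of $\Ym$ and a finite $\epsilon$-net as in the proof of Theorem~\ref{thm:hauscong}, and then combine this with $d_H(\Bmn,\Ym)\to0$ via the triangle inequality for $d_{GH}$.
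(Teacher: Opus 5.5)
\emph{Verdict: the architecture is right, but step (b) has a genuine gap.} Your plan is exactly what the paper means by combining Theorem~\ref{thm:hauscong} with \cite{Walter}: pull back along $R_n$, use $\mathcal{X}_{n,m}\circ R_n=\Bmn$, show that $\Gamma_n$ has small distortion, and let $d_H(\Bmn,\Ym)\to 0$ supply density of the image.

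Step (a) is fine, and in fact exact. $[D_n,R_nf]$ is $R_n$ applied to $\partial_\theta f$ up to a factor $\pm i$, so $R_n(\mathcal L)$ lies in the $\rho_n$-ball and $\rho(\Gamma_n\phi,\Gamma_n\psi)\le\rho_n(\phi,\psi)$.

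The gap is step (b). Its entire content is a uniform bound $\|T-R_n\sigma_n(T)\|\le\gamma_n\|[D_n,T]\|$ with $\gamma_n\to 0$, where $\sigma_n(T)$ is your Fej\'er-damped symbol (coefficients $(1-|k|/n)T_k$). The mechanism you propose does not deliver this bound:
\begin{itemize}
\item There is no uniform concentration of the densities $Q_V^*Q_V$ to exploit. The bound must hold for all pure $\phi$ at once, and by Theorem~\ref{thm:hauscong} the set $\Bmn$ contains, for large $n$, maps $\rho$-close to $f\mapsto\int_\T f\,dz$, whose densities are spread out.
\item Since $\int_\T Q_V^*Q_V\,dz=I$, the formula \eqref{phiFandintegral} only gives $\|\phi(X)\|\le\|F_X\|_\infty$. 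You would then need a uniform sup-norm bound on the symbol of $X=T-R_n\sigma_n(T)$, which you do not supply.
\item Your fallback is circular. The distortion of $\{(\phi,\Gamma_n\phi)\}$ is precisely $\sup_{\phi,\psi}|\rho(\Gamma_n\phi,\Gamma_n\psi)-\rho_n(\phi,\psi)|$, i.e.\ step (b) again. Compactness of $\Ym$ says nothing about $\rho_n$.
\end{itemize}

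The missing idea is to write the Fej\'er damping as an average of the gauge action $\alpha_\theta(T)=e^{i\theta D_n}Te^{-i\theta D_n}$, whose blocks are $e^{i\theta(i-j)}T_{i-j}$. With the Fej\'er kernel $K_n\ge 0$ one has $R_n\sigma_n(T)=\frac{1}{2\pi}\int_{-\pi}^{\pi}K_n(\theta)\,\alpha_\theta(T)\,d\theta$. Also $\|\alpha_\theta(T)-T\|\le|\theta|\,\|[D_n,T]\|$. Hence
\[
\|T-R_n\sigma_n(T)\|\le\gamma_n\,\|[D_n,T]\|,\qquad \gamma_n=\frac{1}{2\pi}\int_{-\pi}^{\pi}|\theta|\,K_n(\theta)\,d\theta=O\Bigl(\frac{\log n}{n}\Bigr).
\]
Next, $\sigma_n(T)(e^{i\theta})=\Psi_1^*\alpha_{-\theta}(T)\Psi_1$ for the isometry $\Psi_1=n^{-1/2}(1,\ldots,1)^t\otimes I_m$. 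Therefore $\|\sigma_n(T)\|_\infty\le\|T\|$ and $\|\partial_\theta\sigma_n(T)\|_\infty\le\|[D_n,T]\|$, so $\sigma_n$ maps the $\rho_n$-ball into $\mathcal L$.

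Since \ucp maps are contractive, for $T$ in the $\rho_n$-ball you get $\|\phi(T)-\psi(T)\|\le\rho(\Gamma_n\phi,\Gamma_n\psi)+2\gamma_n$. This uses neither purity nor \eqref{phiFandintegral}, and holds uniformly in $m$. The distortion of the correspondence $\{(\phi,\Phi):\rho(\Gamma_n\phi,\Phi)\le\epsilon'\}$, with $\epsilon'>d_H(\Bmn,\Ym)$, then gives $d_{GH}(\mathcal{X}_{n,m},\Ym)\le\gamma_n+d_H(\Bmn,\Ym)\to 0$.

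Your integral route can also be rescued, but for a different reason than concentration. Pair $[D_n,T]$ with $e_n\otimes u$ and with $\sum_j\omega^{n-j}e_j\otimes v$, where $|\omega|=1$. This gives $\|\sum_{k\ge 1}\omega^k\,kT_k\|\le\sqrt{n}\,\|[D_n,T]\|$, and similarly for $k<0$. Hence $\|F_{T-R_n\sigma_n(T)}\|_\infty\le 2n^{-1/2}\|[D_n,T]\|$, which is again a uniform estimate, not a concentration phenomenon.
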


\end{document}